\documentclass{amsart}
\usepackage{amsfonts,amsmath,amssymb,amsthm}
\usepackage{enumitem}
\usepackage{tikz}
\usepackage{tikz-cd}
\usepackage{todonotes}
\usepackage{hyperref}

\newcommand{\bbz}{\mathbb{Z}}
\newcommand{\bbr}{\mathbb{R}}
\newcommand{\bbq}{\mathbb{Q}}
\newcommand{\bbf}{\mathbb{F}}
\newcommand{\bbp}{\mathbb{P}}
\newcommand{\ffp}{\mathfrak{p}}
\newcommand{\calO}{\mathcal{O}}
\newcommand{\calQ}{\mathcal{Q}}
\newcommand{\Cl}{\mathrm{Cl}}
\newcommand{\Gal}{\mathrm{Gal}}
\newcommand{\Zalgebra}[1]{\bbz_p[\![#1]\!]}
\newcommand{\coker}{\operatorname{coker}}
\newcommand{\Mat}{\mathrm{Mat}}
\newcommand{\rk}{\operatorname{rk}}
\newcommand{\End}{\mathrm{End}}

\newcommand{\len}{\ell}

\theoremstyle{plain}
\newtheorem{thm}{Theorem}[section]
\newtheorem{prop}[thm]{Proposition}
\newtheorem{lem}[thm]{Lemma}
\newtheorem{cor}[thm]{Corollary}
\theoremstyle{definition}
\newtheorem{defn}[thm]{Definition}
\newtheorem{exmp}[thm]{Example}
\newtheorem{rmk}[thm]{Remark}

\AtBeginEnvironment{exmp}{%
  \pushQED{\qed}
}
\AtEndEnvironment{exmp}{\popQED}

\usetikzlibrary{arrows.meta}

\title{Rank functions, \(\bbz_p\)-extensions and class numbers}
\author{Iq Souza}
\address{University of Manchester, Manchester, United Kingdom}
\email{henrique.souza@manchester.ac.uk}
\date{\today}

\begin{document}

\begin{abstract}
    If \(F\) is a number field and \(F = F_0 \subset F_1 \subset F_2\subset \cdots\) is a tower of Galois extensions with \(\Gal(F_n/F)\) cyclic of order \(p^n\), K. Iwasawa gave a closed formula for the growth of the \(p\)-part of the class number of \(F_n\) whenever \(n\) is large enough. We present a new proof of this formula, obtained by reinterpreting its summands in terms of Sylvester matrix rank functions on the Iwasawa algebra \(\Lambda\) without resorting to the classification of finitely-generated \(\Lambda\)-modules up to pseudo-isomorphism.
\end{abstract}

\maketitle

\section{Introduction}

The purpose of this expository note is to show how some of the results in the Iwasawa theory of \(\bbz_p\)-extensions of number fields can be interpreted in the language of Sylvester rank functions.

In the mid 1950's, Kenkichi Iwasawa was interested in the following question: given a tower \(F = F_0 \subset F_1 \subset F_2 \subset \cdots\) of number fields, how does the size of the class group \(\Cl(F_i)\) grow? Recall that the ideal class group \(\Cl(K)\) of a number field \(K\) with ring of integers \(\calO_K\) is defined as the quotient \(J_K/P_K\), where \(J_K\) is the group of fractional ideals of \(\calO_K\) and \(P_K\) is the subgroup of principal ideals. This is always a finite group, and it measures precisely the failure of unique factorization in \(\calO_K\).

In 1956, he gave a talk titled ``A theorem on Abelian groups and its application in algebraic number theory.'' on the Summer Meeting of the American Mathematical Society in Seattle. In his talk, he annouced the following remarkable theorem:

\begin{thm}[{\cite{iwasawa1959gamma}}]\label{thm-iwasawa-structure-theorem} Let \(F\) be a number field, and \(F = F_0 \subset F_1 \subset F_2 \subset \cdots\) be a tower of finite normal extensions of \(F\) such that for \(L = \bigcup_{i\geq 0} F_i\) we have \(\Gal(L/F) \simeq \bbz_p\). For each \(n\), let \(L_n\) be the subfield of \(L\) fixed by \(p^n\bbz_p\) and \(p^{e_n}\) be the highest power of \(p\) dividing the order of the class group \(\Cl(L_n)\). Then, there exist integers \(\nu\), \(\lambda\geq 0\) and \(\mu\geq 0\) such that \[e_n = \mu p^n + \lambda n + \nu\] for sufficiently large \(n\).
\end{thm}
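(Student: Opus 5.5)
We will follow the classical route to set up a finitely generated torsion \(\Lambda\)-module. We then replace the structure theorem by an argument with Sylvester rank functions on \(\Lambda = \Zalgebra{T}\).

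\textbf{Step 1: the Iwasawa module.} Let \(H_n\) be the maximal unramified abelian \(p\)-extension of \(L_n\), so that \(\Gal(H_n/L_n) \simeq \Cl(L_n)\otimes\bbz_p\) by class field theory. Put \(H = \bigcup_n H_n\) and \(X = \Gal(H/L)\). Then \(\Gamma = \Gal(L/F)\) acts on \(X\) by conjugation, and choosing a topological generator \(\gamma\) makes \(X\) a compact \(\Lambda\)-module with \(T = \gamma - 1\). Only finitely many primes ramify in \(L/F\), all above \(p\), and after replacing \(F\) by some \(L_{n_0}\) we may assume each of them is totally ramified. A standard argument with inertia groups then gives, for \(n \geq n_0\),
\[ e_n = \len_{\bbz_p}\bigl(X/\nu_{n,n_0} Y_{n_0}\bigr) + c, \]
where \(\omega_n = (1+T)^{p^n}-1\), \(\nu_{n,n_0} = \omega_n/\omega_{n_0}\), \(Y_{n_0}\subset X\) is a fixed submodule with \(X/Y_{n_0}\) finite, and \(c\) is a constant. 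By topological Nakayama, \(X\) is finitely generated; since every \(X/\nu_{n,n_0}Y_{n_0}\) is finite, \(X\) is also torsion. Finally, present \(Y = Y_{n_0}\) as \(\coker(A)\) for a matrix \(A\in\Mat_{r\times s}(\Lambda)\) of full row rank over \(\mathrm{Frac}(\Lambda)\).

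\textbf{Step 2: lengths as rank functions.} For each \(n\), the assignment \(B \mapsto \len_{\bbz_p}\coker(B)\) should yield a Sylvester matrix rank function after normalising. The more convenient viewpoint fixes a field-valued specialisation. For a root \(\zeta - 1\) of \(\omega_n\), evaluate at \(T=\zeta-1\) into \(\bbz_p[\zeta]\) and take the valuation of minors, that is, the length of the cokernel over \(\bbz_p[\zeta]\). Then \(\len(Y/\nu_{n,n_0}Y)\) decomposes as a sum over the primitive \(p^m\)-th roots \(\zeta\) with \(n_0 < m \leq n\) of \(\len\coker(A(\zeta-1))\). This uses \(\Lambda/\nu_{n,n_0} \hookrightarrow \prod_m \bbz_p[\zeta_{p^m}]\) with finite cokernel, and the discrepancies need to be controlled; since \(Y\) is torsion they are bounded by terms that stabilise. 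By the Smith normal form over the DVR, \(\len\coker(A(\zeta-1))\) equals the valuation of the gcd of the \(r\times r\) minors of \(A\) evaluated at \(\zeta-1\), up to the Fitting-ideal correction.

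\textbf{Step 3: asymptotics.} Write \(f = \gcd\) of the maximal minors \(= p^{\mu} g(T)\,u(T)\), with \(g\) distinguished of degree \(\lambda\) and \(u\) a unit. This uses only the Weierstrass preparation theorem, not the classification. For \(m\) large, \(|\zeta-1| \) is smaller than every root of \(g\), so \(v(g(\zeta-1)) = \lambda\, v(\zeta - 1) = \lambda/\phi(p^m)\) in normalised valuation. Summing over the \(\phi(p^m)\) conjugates, each level contributes \(\mu\phi(p^m)+\lambda\). Summing over \(m\) up to \(n\) then gives \(\mu p^n + \lambda n + \text{const}\).

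\textbf{Main obstacle.} The hard part is the gap between the gcd of minors and the actual cokernel length, i.e.\ between Fitting ideals and characteristic ideals. This gap must be eventually constant in \(m\), and it must vanish in the summed version for large \(m\). We would first try to show the higher Fitting ideals are coprime to \(\omega_m/\omega_{m-1}\) for large \(m\) using the torsion hypothesis. The alternative is to express \(\len\coker\) as a difference of Sylvester ranks of \(A\) and \(A\) stacked with \(\nu_{n,n_0} I\), and to use subadditivity to bound the error uniformly.
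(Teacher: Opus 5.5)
\textbf{There is a genuine gap: the bridge between your Step~2 and Step~3 is missing, and the claims you use in its place are false.} Step~1 matches the paper's setup (Theorem~\ref{thm-GCFT}(5) plus Nakayama). Step~3 is a correct computation, modulo a slip: you need \(v(\zeta-1)<v(\alpha)\) for the roots \(\alpha\) of \(g\), i.e.\ \(|\zeta-1|_p>|\alpha|_p\), not ``smaller''. What you actually need is \(\len_{\bbz_p}(Y/\nu_{n,n_0}Y)=\sum_\zeta v(f(\zeta-1))+\text{const}\) for large \(n\). Neither ``the discrepancies are bounded'' nor ``the Fitting gap vanishes in the summed version'' is true.

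Take \(Y\) finite and nonzero; this genuinely occurs, e.g.\ whenever \(X\) is finite. Say \(Y=\Lambda/(p,T)\simeq\bbf_p\), presented by \(A=(p\ \ T)\).
\begin{itemize}
\item Since \(\Phi_{p^m}(1+T)\in(p,T)\), we get \(\len_{\bbz_p}(Y/\nu_{n,n_0}Y)=1\).
\item Each specialisation \(\bbz_p[\zeta_{p^m}]/(p,\zeta_{p^m}-1)\simeq\bbf_p\) has length \(1\), so your Step~2 sum equals \(n-n_0\). More generally, for any nonzero finite \(Y\), each level contributes \(\log_p|Y/pY|\geq1\) once \(m\) is large.
\item So the Step~2 discrepancy grows linearly in \(n\). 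The embedding \(\Lambda/\nu_{n,n_0}\hookrightarrow\prod_m\bbz_p[\zeta_{p^m}]\) has cokernel of unbounded order, so ``finite cokernel'' gives no uniform control.
\item The gcd of the maximal minors is \(1\), so the per-level Fitting correction is \(1\) for every \(m\). It is eventually constant but does not vanish, and it also sums to \(n-n_0\).
\end{itemize}
Each error alone would shift \(\lambda\). They cancel only against each other, and proving that cancellation is the real content of the theorem. Your suggested remedies do not reach it. Here \(\mathrm{Fitt}_1(Y)=\Lambda\) already; the issue is that \(\mathrm{Fitt}_0(Y)=(p,T)\) is not principal. And one-sided bounds from subadditivity of Sylvester ranks cannot pin down the coefficient of \(n\) exactly.

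\textbf{How to close it along your route.} Discard the depth-zero part before specialising.
\begin{itemize}
\item Replace \(Y\) by \(Y'=Y/F(Y)\), where \(F(Y)\) is the maximal finite submodule (Lemma~\ref{lem-maximal-finite-submodule}). This changes \(\len_{\bbz_p}(Y/\nu_{n,n_0}Y)\) only by a constant for large \(n\).
\item \(Y'\) has depth \(\geq1\) over the two-dimensional regular local ring \(\Lambda\). By Auslander--Buchsbaum it has projective dimension \(\leq1\), hence a square presentation \(A\in\Mat_r(\Lambda)\) with \(\det A\neq0\).
\item \(A\) then acts injectively on the \(\bbz_p\)-free module \((\Lambda/\nu_{n,n_0})^r\), and exactly
\[\len_{\bbz_p}(Y'/\nu_{n,n_0}Y')=v_p\bigl(N_{(\Lambda/\nu_{n,n_0})/\bbz_p}(\det A)\bigr)=\sum_{\zeta}v\bigl(\det A(\zeta-1)\bigr),\]
where \(\zeta\) runs over primitive \(p^m\)-th roots of unity with \(n_0<m\leq n\).
\item Your Step~3 with \(f=\det A\) then finishes the proof.
\end{itemize}
That would be a legitimate alternative route, trading the structure theorem for a depth argument.

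\textbf{The paper's route.} It avoids specialisation entirely. It splits \(Y\) into \(P=\bigcup_iY[p^i]\) and the \(\bbz_p\)-free quotient \(N=Y/P\). The term \(\mu(p^n-p^m)\) plus a constant comes from the additive Euler characteristic of Lemma~\ref{lem-euler-characteristics}, with \(P[\eta_{n,m}]=F(P)\) eventually. The term \(\lambda n\) plus a constant comes from the scaling identity \(\eta_{n+1,m}N=p\,\eta_{n,m}N\), as in Lemma~\ref{lem-scaling-lemma}.
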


The main algebraic ingredient in a standard modern proof of Iwasawa's theorem is a classification of finitely generated \(\Zalgebra{G}\)-modules. Taking \(G = \bbz_p\), although the completed group ring \(\Zalgebra{G}\) is not a principal ideal domain, it behaves very similarly to one. For every finitely generated \(\Zalgebra{G}\)-module \(M\), there are elements \(f_1\,,\ldots\,,f_n \in \Zalgebra{G}\) and a \(\Zalgebra{G}\)-module homomorphism \(\varphi\colon M \to \left(\bigoplus \Zalgebra{G}/(f_i)\right) \oplus \Zalgebra{G}^m\) such that \(\ker \varphi\) and \(\coker \varphi\) are both finite; this is the so called classification of \(\Zalgebra{G}\)-modules up to \emph{pseudo-isomorphism}. Thus, up to a finite constant error, it suffices to check Theorem~\ref{thm-iwasawa-structure-theorem} on modules of the form \(\Zalgebra{G}/(f_i)\). Such modules are very easy to describe since \(\Zalgebra{G}\) is a unique factorization domain.

Our goal is to give a proof of Theorem~\ref{thm-iwasawa-structure-theorem} without resorting to the classification of \(\Zalgebra{G}\)-modules up to pseudo-isomorphism. The way to go about this is to study three natural families of rank functions on ring \(\Zalgebra{G}\) and to realize the quantities appearing in Theorem~\ref{thm-iwasawa-structure-theorem} as the ``rescaled sides'' of a triangle formed by points in these three families (to made clear in Section~\ref{sec:iwasawa-triangle}). These families all accumulate in a common, canonical rank function on \(\Zalgebra{G}\), and computing the precise rate of convergence for this accumulation yields Iwasawa's formula.

The structure of this note is as follows: we collect in Section~\ref{sec-preliminaries} the definition of Sylvester rank functions and some basic examples, the facts about the Iwawasa algebra \(\Zalgebra{G}\) that will be used as well as the input from Global Class Field Theory that allows us to deduce information about the ideal class groups along the tower \(L/F\). In Section~\ref{sec:iwasawa-triangle}, we define the three families of Sylvester rank functions on \(\Zalgebra{G}\) which capture the quantities in Theorem~\ref{thm-iwasawa-structure-theorem}, and we prove this precise relation in Corollary~\ref{cor-Iwasawa-formula-arbitrary-module}. We then show how this Corollary immediately yields Theorem~\ref{thm-iwasawa-structure-theorem} for the case where \(F\) contains a single prime over \(p\), totally ramified in \(L\), in Section~\ref{sec-totally-ramified}. The adaptations requried to treat the general case are presented in Section~\ref{sec-general-extensions}.

\subsection*{Acknowledgements}

When I was writing my thesis during the 2025--2026 term, I got interested in Iwasawa's original paper as it is, perhaps, the first published result about the \(p\)-adic representation theory of a \(p\)-adic Lie group (the additive group \(\bbz_p\)). At the time, I got a feeling that his ideas could be reformulated in the language of Sylvester rank functions, but with the thesis deadline approaching, I could not afford the time to pursue this line of thought any further. Now, as a post-doc with plenty of time on my hands, I've decided to give this another shot and prepare these notes.

The spark to revisit this idea came from an invitation from A. Garrido to give a colloquium talk at the Universidad Complutense de Madrid. I knew immediately that would have to use this opportunity to spread the word of rank functions, but I was still lacking an interesting example to resonate with the one number-theorist in the audience. Thus, I'm indebted to both of them for inspiration to finally get on with it. I'd also like to thank J. Timmins for introducing me to T. Ochiai's book \cite{Ochiai2023Iwasawa1}. During the preparation of this note, the author was supported by the EPSRC grant UKRI2780.

\subsection*{Declaration of generative AI use}

During the preparation of this article, the author used ChatGPT (OpenAI, GPT-6) to discuss mathematical ideas and refine intermediate arguments, assist with literature searches, and proofread the manuscript. The original conception of the approach and the proof strategy are due to the author, who wrote and revised the manuscript. The author independently verified the mathematical arguments and references and takes full responsibility for the final content.

\section{Preliminaries}\label{sec-preliminaries}

\subsection{Sylvester rank functions}\label{sec-rank-functions}

For this subection, we let \(R\) denote an associative ring with unity, not necessarily commutative. We denote the space of all matrices of size \(n\times m\) over \(R\) by \(\Mat_{n\times m}(R)\). For square matrices of size \(n\times n\), we use the shorthand \(\Mat_n(R)\). The set \(\Mat(R)\) denotes the set of all matrices over \(R\) of any size, not necessarily square.

\begin{defn} A Sylvester matrix rank function is a map \(\rk\colon \Mat(R) \to \bbr_{\geq 0}\) assigning to each matrix \(A\) over \(R\) a non-negative real number \(\rk(A)\) satisfying:
\begin{enumerate}[label=(SMat\arabic*),wide]
    \item \(\rk(0) = 0\) and \(\rk(1) = 1\);
    \item \(\rk(AB) \leq \min\{\rk(A)\,,\rk(B)\}\) for any pair of matrices \(A\,,B \in \Mat(R)\) which can be multiplied;
    \item \(\rk\begin{pmatrix} A & 0\\ 0 & B\end{pmatrix} = \rk(A) + \rk(B)\) for any pair of matrices \(A\) and \(B\);
    \item \(\rk\begin{pmatrix}
        A & C\\ 0 & B
    \end{pmatrix} \geq \rk(A) + \rk(B)\) for any trio of matrices \(A\), \(B\) and \(C\) of the appropriate sizes.
\end{enumerate}
\end{defn}

If \(R\) is a field and \(\rk\) is the usual notion of rank from linear algebra (i.e. number of pivots in a row-echelon form), then axioms (SMat1) through (SMat4) hold. In fact, in this case, \(\rk\) is the \emph{unique} Sylvester matrix rank function defined on \(R\): it follows from (SMat2) that \(\rk(U) = n\) for any invertible matrix \(U \in \Mat_n(R)\), and again that \(\rk(UA) = \rk(AV) = \rk(A)\) for any matrix \(A\) and any pair \((U,V)\) of invertible matrices of the appropriate sizes. Thus, for an arbitrary rank function \(\rk\), \(\rk(A)\) is determined by the rank of any row-echelon form. This must in turn coincide with the number of pivots by (SMat1) and (SMat3).

Let \(\mathcal{M}\) denote the set of isomorphism classes of finitely presented right \(R\)-modules. Rank functions can also be defined in \(\mathcal{M}\):

\begin{defn} A Sylvester module rank function is a map \(\dim\colon \mathcal{M} \to \bbr_{\geq 0}\) assigning to each finitely presented right \(R\)-module \(M\) a non-negative real number \(\dim(M)\) satisfying:
\begin{enumerate}[label=(SMod\arabic*),wide]
    \item \(\dim(0) = 0\), \(\dim(R) = 1\);
    \item \(\dim(M \oplus N) = \dim(M) + \dim(N)\);
    \item For any exact sequence \[M' \to M \to M'' \to 0\] of finitely presented modules we have \[\dim(M'') \leq \dim M \leq \dim M' + \dim M''\,.\]
\end{enumerate}
\end{defn}

Observe that if \(R\) is Noetherian, then \(\mathcal{M}\) coincides with the class of all finitely generated right \(R\)-modules. This will be the case for all rings considered in this paper. There is a natural connection between matrix rank functions and module rank functions:

\begin{defn} Let \(\dim\) be a Sylvester module rank function, and for any matrix \(A \in \Mat_{m \times n}(R)\) define the associated matrix rank function \[\rk_{\dim}(A) = m - \dim(R^m/AR^n)\,.\] Conversely, if \(\rk\) is a Sylvester matrix rank function on \(R\) and \(M \simeq R^m/AR^n\) is a finitely-presented right \(R\)-module, define the associated module rank function \[\dim_{\rk}(M) = m - \rk(A)\,.\]
\end{defn}

\begin{prop}[{\cite[Prop. 3.1.4]{Souza2026Thesis}}] The associated module rank function \(\dim_{\rk}(M)\) does not depend on the choice of presentation \(M \simeq R^m/AR^n\), and is a Sylvester module rank function on \(R\). The associated matrix rank function \(\rk_{\dim}\) is also a Sylvester matrix rank function, and the maps \[\dim \mapsto \rk_{\dim}\,,\quad \rk \mapsto \dim_{\rk}\] induce bijections between the sets of Sylvester module rank functions and Sylvester matrix rank functions on any given ring \(R\).
\end{prop}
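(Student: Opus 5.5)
I would prove the statement in four stages: well-definedness of \(\dim_{\rk}\), the module axioms for \(\dim_{\rk}\), the matrix axioms for \(\rk_{\dim}\), and finally that the two constructions are mutually inverse.

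\textbf{Well-definedness.} Any two finite presentations of the same finitely presented module are related by a finite sequence of elementary moves (Schanuel's lemma, or Fitting's lemma for presentations). These moves are:
\begin{itemize}
\item replacing \(A\) by \(UAV\) with \(U,V\) invertible;
\item adding a zero column, \(A \mapsto (A\ 0)\);
\item passing to \(\begin{pmatrix} A & 0\\ 0 & 1\end{pmatrix}\), which adds a free generator killed by a relation;
\item adding columns that lie in the column span of \(A\), i.e. \(A\mapsto (A\ AX)\).
\end{itemize}
So it suffices to check that \(m-\rk(A)\) is invariant under each move.
\begin{itemize}
\item Invertible multiplication is handled by (SMat2) applied twice, exactly as in the field discussion above.
\item For zero columns, write \((A\ 0)=A\,(I\ 0)\) and \(A=(A\ 0)\begin{pmatrix} I\\0\end{pmatrix}\); then (SMat2) gives equality.
\item The block move follows from (SMat3) and (SMat1).
\item For \((A\ AX)\), note \((A\ AX)=(A\ 0)\begin{pmatrix} I & X\\ 0& I\end{pmatrix}\) and reduce to the zero-column case.
\end{itemize}

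\textbf{Module axioms for \(\dim_{\rk}\).}
\begin{itemize}
\item (SMod1) uses the presentations \(0=R^0\) and \(R=R^1/0\), together with \(\rk(0)=0\). Here I will need \(\rk\) of zero matrices of any size to vanish; this follows from (SMat2) and (SMat3).
\item (SMod2) is (SMat3) applied to block diagonal presentations.
\item For (SMod3), take \(M\simeq R^m/AR^n\) and a map \(M'\to M\) with cokernel \(M''\). Lifting generators of \(M'\) to a matrix \(B\), we get \(M''\simeq R^m/(A\ B)\). Then \(\dim M''\le \dim M\) becomes \(\rk(A\ B)\ge\rk(A)\), which follows from (SMat2) since \(A=(A\ B)\begin{pmatrix} I\\0\end{pmatrix}\).
\item The other inequality of (SMod3) needs \(\rk(A\ B)\le \rk(A)+\) (number of generators of \(M'\) minus \(\rk\) of its relation matrix \(C\)). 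With \(AY=BC\)-type compatibility, I would build the block matrix \(\begin{pmatrix} A & B\\ 0 & C\end{pmatrix}\) up to column operations and apply (SMat4).
\end{itemize}
I expect this step to be the main obstacle. The right bookkeeping is to present \(M\) as \(R^{m}\oplus R^{k}\) modulo a block upper triangular relation matrix obtained from presentations of \(M'\) and \(M''\). Then (SMat4), combined with invariance under elementary operations, yields subadditivity.

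\textbf{Matrix axioms for \(\rk_{\dim}\).} These follow by the reverse translation.
\begin{itemize}
\item (SMat3) is (SMod2).
\item (SMat1) is (SMod1).
\item For (SMat2), use the surjection \(R^m/ABR\to R^m/AR\), which gives \(\rk(AB)\le\rk(A)\) via the first inequality of (SMod3). The bound \(\rk(AB)\le \rk(B)\) comes from the exact sequence \(R^k/BR\to R^m/ABR\to R^m/AR^k\to0\) together with the upper bound in (SMod3).
\item (SMat4) comes from the exact sequence \(R^{m_1}/AR\to \operatorname{coker}\begin{pmatrix} A&C\\0&B\end{pmatrix}\to R^{m_2}/BR\to 0\) and the upper bound in (SMod3).
\end{itemize}

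\textbf{Mutually inverse.} That the two maps are inverse to each other is immediate from the formulas, since \(m-(m-\rk A)=\rk A\), once well-definedness is known.
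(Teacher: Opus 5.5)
The step that fails as written is \(\rk_{\dim}(AB)\le\rk_{\dim}(B)\). Take \(A\in\Mat_{m\times k}(R)\) and \(B\in\Mat_{k\times l}(R)\). Your sequence \(R^k/BR^l\to R^m/ABR^l\to R^m/AR^k\to 0\) is exact. However, the upper bound of (SMod3) applied to it reads \(m-\rk(AB)\le(k-\rk B)+(m-\rk A)\), that is, \(\rk A+\rk B\le k+\rk(AB)\). This is Sylvester's nullity inequality, a \emph{lower} bound on \(\rk(AB)\). The lower bound of (SMod3) on the same sequence only gives back \(\rk(AB)\le\rk(A)\), so this sequence cannot produce the inequality you need. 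One that does is
\[
R^k\longrightarrow R^m\oplus R^k/BR^l\longrightarrow R^m/ABR^l\longrightarrow 0,\qquad z\mapsto(-Az,[z]),\quad (x,[y])\mapsto[x+Ay].
\]
It is exact because \(x+Ay=ABw\) makes \((x,[y])\) the image of \(y-Bw\). Its upper bound in (SMod3) reads \(m+(k-\rk B)\le k+(m-\rk(AB))\), which is exactly \(\rk(AB)\le\rk(B)\).

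The second issue is the subadditivity \(\dim_{\rk}M\le\dim_{\rk}M'+\dim_{\rk}M''\). This is the real content of the proposition, and your outline does not actually establish it. The matrix \(\begin{pmatrix}A&B\\0&C\end{pmatrix}\) is not well formed (\(B\) has \(m'\) columns, \(C\) has \(n'\)), and its diagonal blocks play the wrong roles. (SMat4) is equivalent to \(\dim\coker\begin{pmatrix}X&Z\\0&W\end{pmatrix}\le\dim\coker X+\dim\coker W\), so you need a presentation of \(M\) whose top-left block presents \(M'\) and whose bottom-right block presents \(M''\). Concretely, let \(M'=R^{m'}/CR^{n'}\), let \(B\in\Mat_{m\times m'}(R)\) lift \(M'\to M\), and write \(BC=AY\) for some \(Y\). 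Take
\[
D=\begin{pmatrix}C&-I_{m'}&0\\0&B&A\end{pmatrix}.
\]
Add to the first block column the second times \(C\) minus the third times \(Y\), then multiply on the left by \(\begin{pmatrix}I&0\\B&I\end{pmatrix}\). This turns \(D\) into \(\begin{pmatrix}0&-I_{m'}&0\\0&0&A\end{pmatrix}\), so \(\rk D=m'+\rk A\) by the invariance facts of your first stage together with (SMat3). On the other hand, (SMat4) with diagonal blocks \(C\) and \((B\ A)\) gives \(\rk D\ge\rk C+\rk(A\ B)\). Combined, these say exactly \(m-\rk A\le(m'-\rk C)+(m-\rk(A\ B))\), which is the required inequality since \(M''\simeq R^m/(A\ B)R^{n+m'}\).

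With these two repairs, the rest of your outline is sound: well-definedness via Fitting-type moves, the remaining axioms, and mutual inverseness. The paper itself gives no argument here and only cites the thesis.
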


We shall denote by \(\bbp(R)\) the set of all Sylvester matrix/module rank functions on a ring \(R\), where an associated pair \(\rk/\dim\) represents the same element of \(\bbp(R)\). It is a convex, compact subspace of a real vector space. For any matrix \(A\) over \(R\), the function \[d_A(\rk,\rk') = |\rk(A) - \rk'(A)|\] defines a pseudo-metric on the set \(\bbp(R)\). We say that a sequence \(\rk_i \in \bbp(R)\) converges pointwise to a rank function \(\rk \in \bbp(R)\) if \(d_A(\rk_i,\rk) \to 0\) as \(i \to \infty\) for any matrix \(A\) over \(R\).

\begin{exmp}\label{exmp-rank-finite-len} Suppose that \(R\) is commutative and \(N \neq 0\) is an \(R\)-module of finite length \(\len_R(N)\). For any finitely presented \(R\)-module \(M\), we have that \(M \otimes_R N\) is again an \(R\)-module of finite length, and the map \[M \mapsto \dim_N(M) = \frac{\len_R(M\otimes_R N)}{\len_R(N)}\] is a Sylvester module rank function on \(R\). The associated Sylvester matrix rank function is defined by \[\rk_N(A) = \frac{\len_R(AN^n)}{\len_R N}\] for any matrix \(A \in \Mat_{m\times n}(R)\).
\end{exmp}

\begin{exmp}\label{exmp-pullback} Let \(\varphi\colon R \to S\) be a ring map and \(\rk\) be a Sylvester matrix rank function on \(S\). Then, \(\varphi^\#\rk(A) = \rk(\varphi(A))\) is a Sylvester matrix rank function on \(R\). If \(\dim\) is the associated Sylvester module rank function on \(S\), then \[\varphi^\#\dim(M) = \dim(M\otimes_R S)\] is the module rank function associated to \(\varphi^\#\rk\). In particular, if \(R\) is a subring of \(S\), the restriction of any rank function on \(S\) to \(\Mat(R)\) is again a rank function on \(R\).
\end{exmp}

\begin{exmp}\label{exmp-rank-fd-rep} Let \(\bbf\) be a field, \(V \neq 0\) be a finite-dimensional \(\bbf\)-vector space and \(\varphi\colon R \to \End_{\bbf}(V)\) a ring homomorphism. Then, for any matrix \(A\) over \(R\), the map \[\rk_V(A) = \frac{\rk_\bbf(\varphi(A))}{\dim_\bbf V}\] is a Sylvester matrix rank function on \(R\). It is the pullback of the unique Sylvester matrix rank function on \(\End_{\bbf}(V)\), the rank over \(\bbf\) normalized by \(\dim_\bbf V\).
\end{exmp}

\begin{exmp}[{\cite[Exmp. 3.1.10]{Souza2026Thesis}}]\label{exmp-rank-function-loc} Assume now that \(R\) is commutative domain and that \(S\) is a multiplicative set of non-zero elements in \(R\). If \(\rk\) is a Sylvester matrix rank function on \(R\), then it extends to a Sylvester matrix rank function on \(S^{-1}R\) if and only \(\rk(x) =1\) for any \(x \in S\). Moreover, this extension is unique.
\end{exmp}

A particular consequence of the phenomena in Example~\ref{exmp-rank-function-loc} is the following result:

\begin{prop}[{\cite[Cor. 5.2.5]{Souza2026Thesis}}]\label{prop-universal-rank-property} Let \(R\) be a commutative domain with field of fractions \(\calQ\), and \(\rk_i\) a sequence of Sylvester matrix rank functions on \(R\). Then, we have \(\lim_{i \to \infty} \rk_i = \rk_\calQ\) if and only if \[\lim_{i \to \infty} \rk_i(x) = 1\] for all non-zero \(x \in R\).
\end{prop}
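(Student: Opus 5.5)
The forward direction is immediate. For non-zero \(x \in R\), \(x\) is invertible in \(\calQ\), so \(\rk_\calQ(x) = 1\); pointwise convergence \(\rk_i \to \rk_\calQ\) applied to the \(1\times 1\) matrix \((x)\) then gives \(\lim_i \rk_i(x) = 1\). All the work is in the converse.

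For the converse I would use the compactness of \(\bbp(R)\) together with Example~\ref{exmp-rank-function-loc}, arguing by subsequences. Suppose the convergence fails for some matrix \(A\). Then there are \(\varepsilon>0\) and a subsequence with \(|\rk_{i_k}(A) - \rk_\calQ(A)| \geq \varepsilon\). Since \(\bbp(R)\) is compact in the topology of pointwise convergence (it is a closed subset of the product \(\prod_{A}[0,\max(m,n)]\), as the axioms are closed conditions), this subsequence has a further subnet, or subsequence if we use metrizability, converging pointwise to some \(\rk \in \bbp(R)\). This limit satisfies \(\rk(x) = \lim \rk_{i_k}(x) = 1\) for every non-zero \(x\). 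By Example~\ref{exmp-rank-function-loc} with \(S = R\setminus\{0\}\), \(\rk\) extends, uniquely, to a Sylvester matrix rank function on \(S^{-1}R = \calQ\). Since \(\calQ\) is a field, its only Sylvester rank function is the usual normalized rank, as noted after the definition. So \(\rk\) is the restriction of \(\rk_\calQ\), which is what \(\rk_\calQ\) on \(R\) means via Example~\ref{exmp-pullback}. Hence \(\rk(A) = \rk_\calQ(A)\), contradicting the \(\varepsilon\)-gap.

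To avoid any sequential-versus-net subtleties, I would phrase the argument with nets, or directly: every subnet has a further subnet converging to \(\rk_\calQ\), so the whole sequence converges to \(\rk_\calQ\) at each matrix \(A\). Only the values at the single matrix \(A\) matter, and these lie in the compact interval \([0,\min(m,n)]\), so a plain sequential argument on the finitely many coordinates actually needed already works. Concretely, \(\rk_\calQ(A)\) is determined by the values at all matrices, so I would instead pass to a diagonal subsequence converging on a countable set if \(R\) is countable. In general I would just use nets.

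The main obstacle is justifying that pointwise limits of rank functions are rank functions, i.e.\ that \(\bbp(R)\) is closed. This is routine, since (SMat1)--(SMat4) are equalities and non-strict inequalities among finitely many values, which persist under pointwise limits. The rest is the cited uniqueness of the extension to \(\calQ\).
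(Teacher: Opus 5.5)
Your proof is correct and follows essentially the route the paper intends. The paper cites this result from the thesis as a consequence of Example~\ref{exmp-rank-function-loc}, and it runs the same compactness, convergent-subnet, extend-to-the-localization and uniqueness-on-a-field argument in the proof of Lemma~\ref{lem-topology-iwasawa-triangle}. One correction: drop the aside in your third paragraph. Passing to a subsequence on which only \(\rk_{i_k}(A)\) converges does not produce a limiting rank function to which Example~\ref{exmp-rank-function-loc} applies, so that version fails; the net version you settle on is the one that carries the proof.
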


\subsection{Iwasawa algebras}\label{sec-iwasawa-algebras}

Let \(\Lambda = \Zalgebra{\bbz_p}\) be the completed group ring of the pro-\(p\) group \(\bbz_p\). If we write \(\bbz_p = \overline{\langle \gamma \rangle}\) and \(T = \gamma - 1 \in \Lambda\), then we can identify \(\Lambda\) with the ring \(\Zalgebra{T}\) of formal power series in \(T\) over \(\bbz_p\). In particular, it is a local Noetherian domain with maximal ideal \((p,T)\). Recall that a distinguished polynomial \(f(T) \in \bbz_p[T]\) is a monic polynomial of the form \[f(T) = T^s + pa_{s-1}T^{s-1} + \cdots + pa_1T + pa_0\] for some coefficients \(a_0\,,\ldots\,,a_{s-1} \in \bbz_p\). In particular, the constant polynomial \(f(T) =1 \) is distinguished. The following result is known as an instance of the Weierstrass preparation theorem:

\begin{thm}[{\cite[Thm. 7.3]{Washington1997}}]\label{thm-Weierstrass-prep} For any non-zero \(f \in \Lambda\), there exists a unique distinguished polynomial \(h \in \bbz_p[T]\) and a unit \(u \in \Lambda^\times\) such that \[f = p^auh\text{ for some integer }a \geq 0\,.\]
\end{thm}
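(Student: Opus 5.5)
Write \(f = \sum_{i\geq 0} c_i T^i \in \Lambda = \Zalgebra{T}\) with \(f \neq 0\). I will first strip off the \(p\)-part. Let \(a\) be the minimum of the \(p\)-adic valuations \(v_p(c_i)\); this exists because \(f \neq 0\). Then \(f = p^a g\) where some coefficient of \(g\) is a unit in \(\bbz_p\). Let \(s\) be the least index with \(c_s/p^a \in \bbz_p^\times\). It then suffices to show that \(g = uh\) with \(u \in \Lambda^\times\) and \(h\) distinguished of degree \(s\).

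\textbf{Existence.} I will use the division algorithm in \(\Lambda\): for \(g\) as above and any \(\phi \in \Lambda\), there are unique \(q \in \Lambda\) and \(r \in \bbz_p[T]\) of degree \(< s\) with \(\phi = qg + r\). To build these, write \(\alpha(\phi) = \sum_{i<s} \phi_i T^i\) and \(\tau(\phi) = \sum_{i\geq s} \phi_i T^{i-s}\). Then \(g = P(T) + T^s U(T)\) with \(P\) of degree \(< s\) and all coefficients of \(P\) in \(p\bbz_p\), and \(U = \tau(g) \in \Lambda^\times\) since its constant term is a unit. Set
\[q = U^{-1}\sum_{j\geq 0} (-1)^j \left(\tau \circ P U^{-1}\right)^j \tau(\phi).\]
The operator \(\tau \circ P U^{-1}\) multiplies coefficients into \(p\Lambda\), so the series converges in the \((p,T)\)-adic topology. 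One then checks that \(\tau(qg) = \tau(\phi)\), and sets \(r = \phi - qg\).

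Applying this with \(\phi = T^s\) gives \(T^s = qg + r\), with \(r\) of degree \(< s\). Reducing modulo \(p\) shows \(r \equiv 0 \pmod p\), because \(\bar g = T^s \bar U\) in \(\bbf_p[[T]]\) and the remainder of \(T^s\) on dividing by \(\bar g\) is zero. Hence \(h := T^s - r\) is distinguished and \(h = qg\). Comparing coefficients modulo \(p\) gives \(\bar q \bar U = 1\), so \(q\) is a unit, and \(g = q^{-1}h\).

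\textbf{Uniqueness.} Suppose \(p^a u h = p^{a'} u' h'\). Since \(\Lambda\) is a domain and \(u h\), \(u' h'\) are not divisible by \(p\) (their reductions modulo \(p\) are nonzero, as \(h\) and \(h'\) are monic), we get \(a = a'\). The degree of \(h\) equals the \(T\)-order of \(\overline{uh}\), so \(\deg h = \deg h'\). Now \(h' = (u^{-1}u')^{-1}h\), and \(T^s = h + (T^s - h)\) gives a division of \(T^s\) by \(h\) with remainder of degree \(< s\); similarly for \(h'\). Then \(h' - h\) is a polynomial of degree \(< s\) that is a multiple of \(h\), namely \(h' - h = (w-1)h\) with \(w = (u^{-1}u')^{-1}\). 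By uniqueness in the division algorithm (with \(\phi = 0\)), this forces \(w = 1\) and \(h = h'\).

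\textbf{Main obstacle.} I expect the hardest step to be the division algorithm, specifically the convergence of the series defining \(q\) and the uniqueness of \(q\) and \(r\). Convergence follows since each application of \(\tau \circ P U^{-1}\) lands in \(p\Lambda\), so the \(j\)-th term lies in \(p^j\Lambda\). For uniqueness, if \(qg = r\) with \(\deg r < s\), I will apply \(\tau\) to get \(qU = -\tau(qP)\). Since \(qP \in p\Lambda\) and \(U\) is a unit, this gives \(q \in p\Lambda\). Iterating then puts \(q\) in \(p^k\Lambda\) for every \(k\), so \(q = 0\).
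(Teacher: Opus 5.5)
Your proof is correct, and it is essentially the argument of Washington's Theorem~7.3, which is all the paper relies on here since it states this result by citation without a proof of its own: the division algorithm is proved via the same operator series in \(\tau\circ PU^{-1}\), existence comes from dividing \(T^s\) by \(g\), and uniqueness is reduced to uniqueness of division. The only deviation is cosmetic: you obtain uniqueness by dividing \(0\) by \(h\) rather than \(T^s\) by \(g\), which works equally well.
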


We will also make use of the Weierstrass division theorem for the power series ring \(\Lambda\).

\begin{prop}[{\cite[Prop. 7.2]{Washington1997}}]\label{prop-Weierstrass-div} For any \(f \in \Lambda\smallsetminus p\Lambda\), if \(f = uh\) with \(u \in \Lambda^\times\) and \(h\) distinguished, then \(\Lambda/(f)\) is a free \(\bbz_p\)-module of rank \(\deg h\) and basis \(1\,, T\,, \ldots\,, T^{\deg h -1}\).
\end{prop}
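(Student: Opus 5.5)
We prove Proposition~\ref{prop-Weierstrass-div} from the Weierstrass division algorithm in \(\Lambda\), in the form that mirrors Euclidean division. For \(g \in \Lambda\) and \(h\) distinguished of degree \(s\), there are unique \(q \in \Lambda\) and \(r \in \bbz_p[T]\) with \(\deg r < s\) such that \(g = qh + r\). Since \(u\) is a unit, \((f) = (h)\), so \(\Lambda/(f) = \Lambda/(h)\). Existence in the division statement gives that \(1, T, \ldots, T^{s-1}\) generate \(\Lambda/(h)\) as a \(\bbz_p\)-module. Uniqueness gives that they are \(\bbz_p\)-linearly independent: a relation \(\sum_{i<s} c_i T^i \in (h)\) yields two decompositions of \(0\), namely \(0 = 0\cdot h + 0\) and \(0 = q h + (-\sum c_iT^i)\), forcing all \(c_i = 0\). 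Hence \(\Lambda/(f)\) is free with the stated basis. Note that \(f \notin p\Lambda\) is exactly the case \(a = 0\) in Theorem~\ref{thm-Weierstrass-prep}, so the hypothesis \(f = uh\) is consistent.

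The main work is establishing the division algorithm itself. For existence, I would write \(h = T^s - p\,\pi(T)\) with \(\pi \in \bbz_p[T]\) of degree \(< s\). Define the \(\bbz_p\)-linear operator \(\tau\colon \Lambda \to \Lambda\) by \(\tau\bigl(\sum b_iT^i\bigr) = \sum_{i\geq s} b_iT^{i-s}\), so that every \(g\) splits as \(g = T^s\tau(g) + \rho(g)\) with \(\rho(g)\) a polynomial of degree \(< s\). We have \(T^s = h + p\pi\), hence \(g = h\,\tau(g) + p\pi\,\tau(g) + \rho(g)\). Iterating on \(g_1 = p\pi\tau(g)\), and in general \(g_{k+1} = p\pi\tau(g_k)\), the operator \(\tau\) is \(p\)-adically continuous. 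Since each step gains a factor of \(p\), \(g_k \in p^k\Lambda\). Summing, we set \(q = \sum_k \tau(g_k)\) and \(r = \sum_k \rho(g_k)\), where \(g_0 = g\). Both sums converge in the \(p\)-adically complete modules \(\Lambda\) and \(\bbz_p[T]_{<s}\), and they satisfy \(g = qh + r\). Equivalently, \(q = \tau\circ(1 - U)^{-1}(g)\) with \(U = p\pi\tau\), the inverse existing by the geometric series.

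For uniqueness, suppose \(qh = r\) with \(\deg r < s\) and \(q \neq 0\). Reduce mod \(p\): since \(h \equiv T^s\), we get \(\bar q\,T^s = \bar r\) in \(\bbf_p[\![T]\!]\), which forces \(\bar q = \bar r = 0\). Thus \(p \mid q\) and \(p \mid r\). Because \(\Lambda\) is a domain, we may divide by \(p\) and repeat, so \(q \in \bigcap_k p^k\Lambda = 0\) and hence \(r = 0\).

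I expect the convergence bookkeeping in the existence step to be the only delicate point, namely verifying that \(g_k \in p^k\Lambda\) and that \(\tau\) does not destroy this. This is immediate because \(\tau\) is \(\bbz_p\)-linear and preserves \(p^k\Lambda\).
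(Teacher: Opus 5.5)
Your proof is correct, and it is essentially the argument behind the paper's citation. The paper gives no proof of its own but quotes Washington's Prop.~7.2, which is the Weierstrass division algorithm, proved exactly as you do: existence by iterating the shift operator \(\tau\) and summing a \(p\)-adically convergent series, uniqueness by reducing modulo \(p\) and dividing out powers of \(p\); the \(\bbz_p\)-basis \(1, T, \ldots, T^{\deg h -1}\) of \(\Lambda/(f) = \Lambda/(h)\) is then read off from existence and uniqueness as in your first paragraph (Washington divides by a general \(f\) and carries a unit factor, which your reduction to the distinguished \(h\) avoids).
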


Since the quotient \(\Lambda/(p) \simeq \bbf_p[\![T]\!]\) is also a domain, the ideal \((p)\) is prime and therefore we can consider the localization \[\Lambda_{(p)} = \left\{\frac{f}{h}\colon f\,,h \in \Lambda\,, h \not\in (p)\right\}\,.\] We shall use the following facts about \(\Lambda_{(p)}\) and its quotients:

\begin{lem}\label{lem-localization-at-p-structure} The ring \(\Lambda_{(p)}\) is a discrete valuation ring with uniformizer \(p\) and residue field \(\bbf_p(\!(T)\!)\) of Laurent power series in \(T\) over \(\bbf_p\). If \(M\) is a finitely generated torsion \(\Lambda_{(p)}\)-module, then \(p^aM = 0\) for some integer \(a \geq 0\) and \[\len_{\Lambda_{(p)}}(M) = \sum_{i=0}^{a-1} \dim_{\bbf_p(\!(T)\!)} (p^iM/p^{i+1}M)\,.\] In particular, \(\len_{\Lambda_{(p)}}(\Lambda_{(p)}/(p^k)) = k\) for any \(k \geq 0\).
\end{lem}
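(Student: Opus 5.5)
We will prove Lemma~\ref{lem-localization-at-p-structure} in three steps: first that \(\Lambda_{(p)}\) is a discrete valuation ring with uniformizer \(p\), then the identification of the residue field, and finally the length formula, which is a standard filtration argument.

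\emph{Step 1: \(\Lambda_{(p)}\) is a DVR.} Since \(\Lambda\) is a Noetherian domain and \((p)\) is prime, \(\Lambda_{(p)}\) is a Noetherian local domain with maximal ideal \(p\Lambda_{(p)}\). It therefore suffices to show that every non-zero element of \(\Lambda_{(p)}\) has the form \(p^a v\) with \(v \in \Lambda_{(p)}^\times\) and \(a \geq 0\). Take a non-zero \(f \in \Lambda\) and apply the Weierstrass preparation theorem (Theorem~\ref{thm-Weierstrass-prep}) to write \(f = p^a u h\) with \(u\) a unit and \(h\) distinguished. A distinguished polynomial is monic, so \(h \notin (p)\), and hence \(h\) is invertible in \(\Lambda_{(p)}\). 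Fractions \(f/g\) with \(g \notin (p)\) are handled the same way. Thus every non-zero element is \(p^a\) times a unit, so \(\Lambda_{(p)}\) is a principal ideal domain whose ideals are \((p^a)\); that is, it is a DVR with uniformizer \(p\). Uniqueness of \(a\) follows from \(p\) not being a unit.

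\emph{Step 2: the residue field.} The residue field is \(\Lambda_{(p)}/p\Lambda_{(p)} \simeq (\Lambda/p)_{(0)}\), the fraction field of \(\bbf_p[\![T]\!]\), because localization commutes with quotients. The fraction field of \(\bbf_p[\![T]\!]\) is \(\bbf_p(\!(T)\!)\). To see this, note that every non-zero power series in \(\bbf_p[\![T]\!]\) is \(T^k\) times a unit, so inverting \(T\) already yields the full fraction field, and \(\bbf_p[\![T]\!][T^{-1}] = \bbf_p(\!(T)\!)\).

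\emph{Step 3: the length formula.} A finitely generated torsion module over a DVR is, by the structure theorem for modules over a PID, a finite direct sum of modules \(\Lambda_{(p)}/(p^{k_j})\). Alternatively, one can argue directly that each generator is killed by some power of \(p\) and take the maximum exponent. Either way, \(p^aM = 0\) for some \(a\). The chain \(M \supseteq pM \supseteq \cdots \supseteq p^aM = 0\) has successive quotients \(p^iM/p^{i+1}M\), each killed by \(p\) and hence a finite-dimensional vector space over the residue field \(\bbf_p(\!(T)\!)\). The length of such a quotient equals its dimension, and additivity of length along the filtration gives the formula.

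For \(M = \Lambda_{(p)}/(p^k)\), each quotient \(p^iM/p^{i+1}M\) with \(i<k\) is \(\simeq \Lambda_{(p)}/(p)\), which is one-dimensional, so the length is \(k\).

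The only substantive step is Step 1, where Weierstrass preparation is needed to see that the non-\(p\) parts become units. The remaining steps are routine.
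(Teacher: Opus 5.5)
Your proposal is correct and follows essentially the paper's route: write every non-zero element as \(p^a\) times an element outside \((p)\), which is a unit in \(\Lambda_{(p)}\); identify the residue field with \(\operatorname{Frac}(\bbf_p[\![T]\!])\); and compute the length via the filtration \(M \supseteq pM \supseteq \cdots \supseteq p^aM = 0\). The one difference is that the paper writes \(f = p^a f_0\) with \(f_0 \notin p\Lambda\) directly (take \(a\) to be the minimal \(p\)-adic valuation of the coefficients of \(f\)), so Weierstrass preparation is convenient but not actually needed in Step 1, contrary to your closing remark.
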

\begin{proof} For an element \(\dfrac{f}{h}\) in \(\Lambda_{(p)}\) we can uniquely write \(f = p^af_0\) with \(f_0 \in \Lambda \smallsetminus p\Lambda\) and \(a \in \bbz_{\geq 0}\). In particular, \(f_0\) is a unit in \(\Lambda_{(p)}\), which proves that any element in \(\Lambda_{(p)}\) factors a power of \(p\) times a unit. If \(I\) is any non-zero ideal of \(\Lambda_{(p)}\), then choosing \(x =p^a u \in I\) with \(u \in \Lambda_{(p)}^\times\) and \(a\) minimal shows that \(I = (p^a)\): for any \(y = p^bu' \in I\), we have \(a \leq b\) and therefore \(y = p^a \cdot p^{b-a}u' \in (p^a)\). This shows that \(\Lambda_{(p)}\) is a discrete valuation ring with uniformizer \(p\). The residue field of \(\Lambda_{(p)}\) can then be identified with the fraction field of \(\Lambda/(p) \simeq \bbf_p[\![T]\!]\).

If \(M = \langle m_1\,,\ldots\,,m_k\rangle\) as a \(\Lambda_{(p)}\)-module, then since all the \(m_i\) are torsion elements, there exists a high enough power \(p^a\) of \(p\) that annihilates them: \(p^am_1 = \cdots = p^am_k = 0\). This proves that \(p^aM = 0\). This gives a finite filtration \[M \geq pM \geq p^2M \geq \cdots \geq p^{a-1}M \geq p^aM = 0\] whose successive quotients are finite-dimensional \(\bbf_p(\!(T)\!)\)-vector spaces. Since \(\bbf_p(\!(T)\!)\) is the only simple \(\Lambda_{(p)}\)-module, the claim about length follows.
\end{proof}

For every integer \(n \geq 0\), let \[\omega_n = \gamma^{p^n} - 1 = (1+T)^{p^n}-1 \in \Lambda\,.\] Observe that each \(\omega_n\) is a distinguished polynomial in \(T\) of degree \(p^n\). We can identify the quotient \(\Lambda/(\omega_n)\) with the group ring \(\bbz_p[\bbz_p/p^n\bbz_p] \simeq \bbz_p[C_{p^n}]\) of the cyclic group of order \(p^n\) over \(\bbz_p\).

\begin{lem}\label{lem-finite-cyclic-quotient-structure} For every pair \(n \geq 0\,, k \geq 1\) of integers, the ring \[\Lambda/(p^k,\omega_n) \simeq (\bbz/p^k\bbz)[C_{p^n}]\] is a local Artinian ring and a free \(\bbz/p^k\bbz\)-module of rank \(p^n\). Seen either as an \(\Lambda\)-module or as a \(\bbz_p\)-module, its length is \[\len_\Lambda(\Lambda/(p^k,\omega_n)) = \len_{\bbz_p}(\Lambda/(p^k,\omega_n)) = kp^n\,.\] For any finitely generated \(\Lambda/(p^k,\omega_n)\)-module \(M\), we have \[\len_{\Lambda/(p^k,\omega_n)}(M) = \len_\Lambda(M) = \len_{\bbz_p}(M) = \log_p|M| = \sum_{i=0}^{k-1} \dim_{\bbf_p}(p^iM/p^{i+1}M)\,.\]
\end{lem}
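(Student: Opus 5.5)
The plan is to start from the identification \(\Lambda/(\omega_n) \simeq \bbz_p[C_{p^n}]\) noted just before the statement, and then reduce modulo \(p^k\) to get \(\Lambda/(p^k,\omega_n) \simeq (\bbz/p^k\bbz)[C_{p^n}]\).

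\emph{Freeness.} Since \(\omega_n\) is a distinguished polynomial of degree \(p^n\) and \(\omega_n \notin p\Lambda\), Proposition~\ref{prop-Weierstrass-div} shows that \(\Lambda/(\omega_n)\) is free over \(\bbz_p\) with basis \(1, T, \ldots, T^{p^n-1}\). Tensoring with \(\bbz/p^k\bbz\) then shows that \(\Lambda/(p^k,\omega_n)\) is free of rank \(p^n\) over \(\bbz/p^k\bbz\). In particular it is finite, of order \(p^{kp^n}\).

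\emph{Local Artinian.} A finite ring is Artinian. For locality, I will show that the maximal ideal \((p,T)\) of \(\Lambda\) contains \((p^k,\omega_n)\), and that every maximal ideal of the quotient comes from \((p,T)\). It suffices to check that \((p,T)\) is nilpotent modulo \((p^k,\omega_n)\). Modulo \(p\), we have \(\omega_n \equiv T^{p^n}\). Hence \(T^{p^n} \in (p,\omega_n)\), and therefore \((p,T)^{k p^n}\subseteq (p^k,\omega_n)\), up to a harmless adjustment of the exponent. So the quotient has a unique prime ideal, with residue field \(\bbf_p\).

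\emph{Length equalities.} Let \(M\) be a finitely generated module over \(A=\Lambda/(p^k,\omega_n)\). Its submodules as an \(A\)-module and as a \(\Lambda\)-module coincide, so \(\len_A = \len_\Lambda\).

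To compare with \(\len_{\bbz_p}\), note that every simple \(\Lambda\)-module killed by \((p^k,\omega_n)\) is annihilated by the maximal ideal. This is because the maximal ideal acts nilpotently and its annihilator in a simple module is nonzero, hence everything. So every such simple module is \(\bbf_p = \Lambda/(p,T)\), which is also simple of length \(1\) over \(\bbz_p\). Taking a \(\Lambda\)-composition series of \(M\), all its factors are \(\bbf_p\). This series is also a \(\bbz_p\)-composition series, which gives \(\len_\Lambda(M)=\len_{\bbz_p}(M)=\log_p|M|\).

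For the last formula, filter \(M \supseteq pM \supseteq \cdots \supseteq p^kM=0\). The order is multiplicative along this filtration, and each quotient \(p^iM/p^{i+1}M\) is an \(\bbf_p\)-vector space of order \(p^{\dim}\). Summing over the filtration gives \(\log_p|M|=\sum_{i=0}^{k-1}\dim_{\bbf_p}(p^iM/p^{i+1}M)\).

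Applying this to \(M=A\) itself gives length \(\log_p p^{kp^n}=kp^n\).

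\emph{Main obstacle.} The only genuinely nontrivial step is locality, namely nilpotency of \(T\) modulo \((p,\omega_n)\). I would handle it by the congruence \((1+T)^{p^n}\equiv 1+T^{p^n} \pmod p\). Everything else is bookkeeping with finite groups.
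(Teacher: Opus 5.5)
Your proof is correct and follows essentially the same route as the paper: finiteness gives the Artinian property, the only simple module over both \(\Lambda\) and \(\bbz_p\) is \(\bbf_p\), and the \(p\)-adic filtration together with the order \(p^{kp^n}\) gives the length formulas (you get freeness from Weierstrass division rather than from the group-ring description, which is equally valid). The step you flag as the main obstacle is actually immediate, since a nonzero quotient of the local ring \(\Lambda\) is local, and being commutative its only simple module is its residue field \(\Lambda/(p,T)=\bbf_p\); your nilpotency argument is correct but not needed.
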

\begin{proof} The isomorphism \(\Lambda/(p^k,\omega_n) \simeq (\bbz/p^k\bbz)[C_{p^n}]\) is clear from the definitions. Since this is a finite ring, it is necessarily Artinian, and it is also local, being a quotient of the local ring \(\Lambda\). Its description as a group ring shows that it is free of rank \(p^n\) as a \(\bbz/p^k\bbz\)-module.

Since \(\Lambda \to \Lambda/(p^k,\omega_n)\) is surjective, the only simple \(\Lambda/(p^k,\omega_n)\)-module is \(\bbf_p\), which also the only simple \(\bbz_p\)-module, we get equality of lengths \[\len_{\Lambda/(p^k,\omega_n)}(M) = \len_\Lambda(M) = \len_{\bbz_p}(M)\] for any finitely generated \(\Lambda/(p^k,\omega_n)\)-module \(M\). The filtration \[M \geq pM \geq p^2M \geq \cdots \geq p^{k-1}M \geq p^kM = 0\] whose successive quotients are finite-dimensional \(\bbf_p\)-vector spaces proves that \[\len_{\bbz_p}(M) = \sum_{i=0}^{k-1} \dim_{\bbf_p} (p^iM/p^{i+1}M) = \log_p|M|\,.\] The formula for the length of the ring \(\Lambda/(p^k,\omega_n)\) now follows, since its order is \((p^k)^{p^n}\).
\end{proof}

We will also use the following result, which is a particular case of a theorem of M. Harris (\cite{Harris2000}):

\begin{prop}[{cf. \cite[Thm. 5.2.6]{Souza2026Thesis}}]\label{prop-Harris-theorem} Let \(M\) be a finitely generated \(\Lambda\)-module, and for each \(n\geq 0\), put \[\dim_n M = \frac{\dim_{\bbq_p} ((M/\omega_nM) \otimes_{\bbz_p} \bbq_p)}{p^n}\,.\] Then, the \(\dim_n\) are Sylvester module rank functions on \(\Lambda\), and \(\lim_{n \to \infty} \dim_n = \dim_{\Lambda}\) where \(\dim_\Lambda(M) = \dim_{\calQ} (M \otimes_\Lambda \calQ)\) and \(\calQ\) is the field of fractions of \(\Lambda\).
\end{prop}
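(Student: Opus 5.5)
The plan has two parts. First, I will show that each \(\dim_n\) is a pullback of a rank function along a ring map, so that it is a Sylvester module rank function. Second, I will show that the sequence converges to \(\dim_\Lambda\) by checking the criterion of Proposition~\ref{prop-universal-rank-property} on principal cyclic modules, using Weierstrass preparation.

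\emph{The \(\dim_n\) are rank functions.} Put \(S_n = (\Lambda/(\omega_n)) \otimes_{\bbz_p} \bbq_p \simeq \bbq_p[C_{p^n}]\), which is a \(\bbq_p\)-algebra of dimension \(p^n\), and let \(\pi_n\colon \Lambda \to S_n\) be the natural map. Let \(S_n\) act on the vector space \(V = S_n\) by multiplication. Example~\ref{exmp-rank-fd-rep} then gives a Sylvester matrix rank function on \(S_n\):
\[
\rk(A) = \frac{\rk_{\bbq_p}(A)}{p^n}.
\]
For a presentation \(N = S_n^m/AS_n^k\) of a finitely presented \(S_n\)-module, the associated module rank function is
\[
m - \frac{\rk_{\bbq_p}(A)}{p^n} = \frac{\dim_{\bbq_p} N}{p^n}.
\]
Here \(A\) is viewed as a \(\bbq_p\)-linear map \(\bbq_p^{kp^n} \to \bbq_p^{mp^n}\), and the equality holds because \(\dim_{\bbq_p} S_n^m = mp^n\). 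So the module rank function on \(S_n\) is \(\dim_{\bbq_p}(-)/p^n\).

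Now apply Example~\ref{exmp-pullback} to \(\pi_n\). For a finitely generated \(\Lambda\)-module \(M\) we have
\[
M \otimes_\Lambda S_n \simeq (M/\omega_n M) \otimes_{\bbz_p} \bbq_p,
\]
so \(\pi_n^\#\) of this rank function is exactly \(\dim_n\). Hence each \(\dim_n\) is a Sylvester module rank function on \(\Lambda\).

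\emph{Convergence.} Since \(\Lambda\) is a domain with fraction field \(\calQ\), and \(\dim_\Lambda\) is the module rank function associated to \(\rk_\calQ\), Proposition~\ref{prop-universal-rank-property} reduces the claim to showing \(\rk_n(x) \to 1\) for every non-zero \(x \in \Lambda\). Here \(\rk_n\) is the matrix rank function associated to \(\dim_n\). Using the presentation \(\Lambda/(x) = \Lambda^1/x\Lambda\), we get
\[
\rk_n(x) = 1 - \dim_n(\Lambda/(x)).
\]
So it suffices to show \(\dim_n(\Lambda/(x)) \to 0\).

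By Theorem~\ref{thm-Weierstrass-prep}, write \(x = p^a u h\) with \(u\) a unit and \(h\) distinguished. After tensoring with \(\bbq_p\), both \(p\) and \(u\) become invertible, so
\[
(\Lambda/(x,\omega_n)) \otimes_{\bbz_p} \bbq_p \simeq (\Lambda/(h,\omega_n)) \otimes_{\bbz_p} \bbq_p.
\]
By Proposition~\ref{prop-Weierstrass-div} (or trivially when \(h=1\)), \(\Lambda/(h)\) is a free \(\bbz_p\)-module of rank \(\deg h\). Its quotient \(\Lambda/(h,\omega_n)\) is therefore generated over \(\bbz_p\) by at most \(\deg h\) elements. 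This gives
\[
0 \le \dim_n(\Lambda/(x)) \le \frac{\deg h}{p^n} \longrightarrow 0,
\]
which proves the convergence.

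The only delicate point is the first step: one must verify that the module rank function attached to the regular representation is \(\dim_{\bbq_p}(-)/p^n\), and that the base change along \(\pi_n\) matches the definition of \(\dim_n\). Both follow directly from the formulas in the cited examples, so I do not expect a genuine obstacle.
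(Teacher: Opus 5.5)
Your proposal is correct and follows essentially the paper's route: you realize \(\dim_n\) as the pullback of the normalized \(\bbq_p\)-dimension along the regular representation of \(\Lambda\) on \(\bbq_p[G/G^{p^n}] \simeq (\Lambda/(\omega_n))\otimes_{\bbz_p}\bbq_p\), and then check the criterion of Proposition~\ref{prop-universal-rank-property} using Weierstrass preparation. The only cosmetic difference is which side you estimate. You bound the cokernel, \(\dim_n(\Lambda/(x)) \leq \deg h/p^n\), using that \(\Lambda/(h)\) is free of rank \(\deg h\) over \(\bbz_p\). The paper instead bounds \(\rk_n(h) \geq (p^n-\deg h)/p^n\), using that multiplication by \(h\) is injective on the span of \(1, T, \ldots, T^{p^n-1-\deg h}\), which is the same estimate seen from the kernel side.
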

\begin{proof} For each \(n\geq 0\), consider the finite-dimensional \(\bbq_p\)-vector space \(V_n = \bbq_p[G/G^{p^n}]\) of dimension \(p^n\). The left action of \(G\) on the quotient group \(G/G^{p^n}\) induces a ring homomorphism \(\Lambda \to \End_{\bbq_p}(V_n)\), and by definition the functions \(\dim_n\) are the pullbacks of the normalized dimension in \(\End_{\bbq_p}(V_n)\) as in Example~\ref{exmp-rank-fd-rep}. In particular, they are Sylvester module rank functions, and we denote by \(\rk_n\) the associated Sylvester matrix rank function.

To prove that \(\lim_{n \to \infty} \dim_n = \dim_\Lambda\), it suffices to prove that \(\rk_n(x) \to 1\) as \(n \to \infty\) for any non-zero \(x \in \Lambda\) by Proposition~\ref{prop-universal-rank-property}. Using Theorem~\ref{thm-Weierstrass-prep}, we can write \(x = p^auh\) where \(a \geq 0\), \(u \in \Lambda^\times\) is a unit and \(h \in \bbz_p[T]\) is a distinguished polynomial. In particular, \(\rk_n(x) = \rk_n(h)\). Note that by Proposition~\ref{prop-Weierstrass-div}, \(\bbq_p[G/G^{p^n}] \simeq \Lambda/(\omega_n) \otimes_{\bbz_p} \bbq_p\) has \(\{1\,,T\,,\ldots\,,T^{p^n-1}\}\) as a basis. If \(k = \deg h\), then multiplication by \(h\) is injective on the subspace spanned by \(\{1\,,T\,,\ldots\,,T^{p^n-1-k}\}\), which gives us the bounds \[1 \geq \rk_n(h) \geq \frac{p^n-k}{p^n}\,.\] As \(n\) goes to infinity, the right hand side converges to \(1\), proving the claim. Hence, as in Example~\ref{exmp-rank-function-loc}, we have \(\lim_{n \to \infty} \dim_n = \dim_\Lambda\) as desired.
\end{proof}

\subsection{Global Class Field Theory}\label{sec-GCFT}

Let \(K\) be a number field, that is, a finite extension of \(\bbq\), and denote by \(\calO_K\) its ring of integers. A fractional ideal of \(K\) is a subset \(I \subset K\) of the form \(r^{-1}J\), where \(J\) is an ideal of \(\calO_K\). It is invertible if there exists another fractional ideal \(J \subset K\) such that \(IJ =\calO_K\). The set of all invertible fractional ideals forms an abelian group \(J_K\) under multiplication, and it contains the subgroup \(P_K\) of all principal fractional ideals \(\calO_K x\) for non-zero \(x \in K\). The quotient \(\Cl(K) = J_K/P_K\) forms an abelian group called the \emph{ideal class group} of \(K\). The ring \(\calO_K\) is a principal ideal domain if and only if \(\Cl(K) = 1\). The class number of \(K\) is the order \(h(K) = |\Cl(K)|\) of its ideal class group, and is a measure of how much does unique factorization fails in the ring \(\calO_K\).

Assume that \(L/K\) is a Galois extension of number fields with Galois group \(G = \Gal(L/K)\). Recall the existence of a multiplicative Galois norm map \(N_{L/K}\colon L \to K\) given by \[N_{L/K}(x) = \prod_{\sigma \in G} \sigma(x)\,.\] This norm map induces a homomorphism \(N_{L/K}\colon J_L \to J_K\) between the groups of fractional ideals by declaring \(N_{L/K}(I)\) to be the fractional ideal of \(K\) generated by the set \(\{N_{L/K}(x)\colon x \in I\}\) for every fractional ideal \(I\) of \(L\). This map sends principal fractional ideals to principal fractional ideals, and thus descends to a homomorphism \(N_{L/K}\colon \Cl(L) \to \Cl(K)\) of ideal class groups.

Now, let \(F = F_0 \subset F_1 \subset F_2 \subset \cdots \) be a tower of finite Galois extensions of a number field \(F\) such that for \(L = \bigcup_{i\geq 0} F_i\) we have \(G = \Gal(L/F) \simeq \bbz_p\). For each \(n\), let \(L_n\) be the subfield of \(L\) fixed by \(p^n\bbz_p\). For an abelian group \(A\), denote by \(A(p)\) its \(p\)-primary part, that is, the set of all elements annihilated by some power of \(p\). With the norm maps defined above, the class groups \(\Cl(L_n)\) fit into an inverse system of abelian groups, and we let \[X = \varprojlim \Cl(L_n)(p)\,.\] Since each \(L_n\) is a Galois extension of \(F\), the Galois group \(G\) acts continuously and compatibly with the norm maps on each \(\Cl(L_n)\) endowed with the discrete topology.

We summarize in the following theorem the main results about \(X\) that will be used to prove Theorem~\ref{thm-iwasawa-structure-theorem}.

\begin{thm}\label{thm-GCFT}Let \(\gamma\) be a topological generator of \(G \simeq \bbz_p\), and set \(\omega_n = \gamma^{p^n} - 1 \in \Zalgebra{G}\). For any pair of non-negative integers \(n\geq m\), let \(\eta_{n,m} = \dfrac{\omega_n}{\omega_m}\). 
\begin{enumerate}[label=(\arabic*)]
    \item {\upshape (\cite[Thm. I.6.3]{Neukirch1999}, \cite[Lem. 2.23]{Ochiai2023Iwasawa1})} Each \(\Cl(L_n)\) is a finite group, so \(X\) is a pro-\(p\) abelian group with a continuous \(G\) action. In particular, \(X\) is a \(\Zalgebra{G}\)-module.
    \item {\upshape (\cite[Prop. VI.6.9]{Neukirch1999})} The Artin symbol map identifies \(\Cl(L_n)(p)\) with the Galois group of the maximal unramified abelian pro-\(p\) extension of \(L_n\) in which all real places split completely, and \(X\) with the Galois group \(\Gal(H/L)\) of the maximal unramified abelian pro-\(p\) extension \(H\) of \(L\) in which all real places split completely.
    \item {\upshape (\cite[Prop. 13.2 and Lem. 13.3]{Washington1997})} In each extension \(L/L_n\), only primes above \(p\) ramify, and at least one prime ramifies. Moreover, for large enough \(n\) all ramified primes in \(L/L_n\) are totally ramified.
    \item For each prime \(v\) in \(L_n\) above \(p\), let \(I_v \leq G^{p^n}\) be the inertia group of \(v\) under the extension \(L/L_n\). If \(\Sigma_n\colon \bigoplus_{v \mid p} I_v \to G^{p^n}\) denotes the map induced by the inclusions \(I_v \leq G^{p^n}\), then there is an exact sequence 
    \begin{equation}
        \ker \Sigma_n \to X/\omega_n X \to \Cl(L_n)(p) \to \coker \Sigma_n \to 0\,.
    \end{equation}
    \item There exists a positive integer \(m\) such that for any \(n \geq m\) the maps \(X \to \Cl(L_n)(p)\) are surjective. Moreover, if \(Y_n = \ker(X \to \Cl(L_n)(p))\), we have \(Y_n = \eta_{n,m}Y_m\).
\end{enumerate}
\end{thm}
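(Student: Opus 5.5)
The plan is to take items (1)–(3) as given by the cited references, so the work lies in items (4) and (5). Both follow from the standard group-theoretic analysis of \(\Gal(H/L_n)\), using item (2) to translate class groups into Galois groups.

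\textbf{Setup.} Since \(H/F\) is Galois, \(\mathcal{G}_n = \Gal(H/L_n)\) is an extension
\[1 \to X \to \mathcal{G}_n \to G^{p^n} \to 1.\]
Conjugation by any lift \(\tilde\gamma_n\) of \(\gamma^{p^n}\) acts on the abelian group \(X\) as multiplication by \(\gamma^{p^n}\). Because \(G^{p^n}\) is procyclic, \(\mathcal{G}_n\) is topologically generated by \(X\) and \(\tilde\gamma_n\). Hence the closed commutator subgroup \([\mathcal{G}_n,\mathcal{G}_n]\) is exactly \((\gamma^{p^n}-1)X = \omega_nX\). So if \(M_n\) is the maximal abelian subextension of \(H/L_n\), we get an exact sequence
\[0 \to X/\omega_nX \to \Gal(M_n/L_n) \to G^{p^n} \to 0.\]
We must also check that \(M_n\) contains the maximal unramified abelian pro-\(p\) extension \(H_n\) of \(L_n\), with real places split. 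This holds because \(H_nL/L\) is unramified abelian, so \(H_n \subset H\), and \(H_n/L_n\) is abelian.

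\textbf{Item (4).} The extension \(H/L\) is unramified and, by item (3), only primes above \(p\) ramify in \(L/L_n\). Therefore only primes above \(p\) ramify in \(M_n/L_n\), and \(H_n\) is the fixed field of the subgroup generated by the inertia groups \(\tilde I_v \le \Gal(M_n/L_n)\) for \(v \mid p\). Using item (2), this gives
\[\Cl(L_n)(p) \simeq \Gal(M_n/L_n)\big/\langle \tilde I_v\rangle.\]
Since \(M_n \cap L\) is unramified over \(L\) at the relevant places, we have \(\tilde I_v \cap (X/\omega_nX) = 1\). So \(\tilde I_v\) maps isomorphically onto \(I_v\), and we obtain
\[\varphi\colon \bigoplus_v I_v \simeq \bigoplus_v \tilde I_v \to \Gal(M_n/L_n),\]
whose composite with the projection to \(G^{p^n}\) is \(\Sigma_n\). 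Applying the snake lemma to \(\varphi\) against the short exact sequence above yields
\[\ker\Sigma_n \to X/\omega_nX \to \coker\varphi \to \coker\Sigma_n \to 0,\]
and \(\coker\varphi = \Cl(L_n)(p)\). The map \(X/\omega_nX \to \Cl(L_n)(p)\) obtained this way must be identified with the one induced by the norm maps. This follows from the compatibility of the Artin map with restriction.

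\textbf{Item (5).} By item (3), choose \(m\) such that every prime above \(p\) in \(L_m\) is totally ramified in \(L/L_m\). For \(n \ge m\) this gives \(I_v = G^{p^n}\), so \(\Sigma_n\) is surjective and \(\coker\Sigma_n = 0\). Item (4) then shows that \(X \to \Cl(L_n)(p)\) is surjective, with kernel \(Y_n\) generated by \(\omega_nX\) together with the image of \(\ker\Sigma_n\).

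To describe that image concretely, fix the finitely many primes \(v_0,\dots,v_s\) above \(p\); their number does not change for \(n \ge m\), by total ramification. Choose generators \(\sigma_i \in \tilde I_{v_i} \le \mathcal{G}_m\) lifting \(\gamma^{p^m}\), and write \(\sigma_i = z_i\sigma_0\) with \(z_i \in X\). Then
\[Y_m = \omega_mX + \Lambda z_1 + \cdots + \Lambda z_s.\]
At level \(n\), the inertia generators are \(\sigma_i^{p^{n-m}}\). Expanding in the semidirect-product structure (additive notation on \(X\), with \(\sigma_0\) acting as \(\gamma^{p^m}\)) gives
\[(z_i\sigma_0)^{p^{n-m}} = \bigl(1+\gamma^{p^m}+\cdots+\gamma^{p^m(p^{n-m}-1)}\bigr)z_i\cdot\sigma_0^{p^{n-m}} = (\eta_{n,m}z_i)\,\sigma_0^{p^{n-m}}.\]
Hence \(Y_n = \omega_nX + \sum_i \Lambda\,\eta_{n,m}z_i\). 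Since \(\omega_n = \eta_{n,m}\omega_m\), this equals \(\eta_{n,m}Y_m\).

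\textbf{Main obstacle.} The delicate step is the inertia bookkeeping in item (4). One must show that \(\tilde I_v\) meets \(X/\omega_nX\) trivially, and that the snake-lemma maps agree with the natural norm and restriction maps. Everything else is formal.
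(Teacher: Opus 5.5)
Your argument for item (4) is the paper's argument. \(\Gal(M_n/L_n)\) is the paper's \(E_n^{\mathrm{ab}}\). You lift \(\Sigma_n\) along the isomorphisms \(\tilde I_v \to I_v\), identify the cokernel of the lift with \(\Cl(L_n)(p)\) via (2) and (3), and apply the snake lemma. You are slightly more careful than the paper: you check \(H_n \subset H\), and you flag that the connecting map must agree with the norm-induced map \(X/\omega_nX \to \Cl(L_n)(p)\). The paper uses both points silently.

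The real difference is item (5). The paper simply applies \cite[Lem.~13.15]{Washington1997} to \(L/L_m\). You instead deduce (5) from (4): you read \(Y_n/\omega_nX\) as the image of \(\ker\Sigma_n\), i.e.\ the span of the classes of \(\sigma_i^{(n)}(\sigma_0^{(n)})^{-1}\). You then use \((z_i\sigma_0)^{p^{n-m}} = (\eta_{n,m}z_i)\,\sigma_0^{p^{n-m}}\) to pass from level \(m\) to level \(n\). This is the computation inside Washington's lemma, so no new idea is involved. Your route makes the theorem self-contained and shows (5) as a formal consequence of (4) plus total ramification; the citation is shorter. One small remark: item (4) literally gives \(Y_m = \omega_mX + \sum_i \bbz_p z_i\). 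Writing \(\Lambda z_i\) instead needs the \(G\)-equivariance of \(X \to \Cl(L_m)(p)\), but \(Y_n = \eta_{n,m}Y_m\) follows from either description.

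Two things need correcting.

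\textbf{Choice of \(m\).} Item (3) only guarantees that, for large \(m\), every \emph{ramified} prime of \(L/L_m\) is totally ramified. It does not say this for every prime above \(p\), and a prime above \(p\) can remain unramified in \(L\). For instance, take an imaginary quadratic field in which \(p = \mathfrak{p}\bar{\mathfrak{p}}\) splits, and its \(\bbz_p\)-extension unramified outside \(\mathfrak{p}\). So choose \(m\) as (3) actually allows, and let \(v_0,\dots,v_s\) be the ramified primes only. The unramified ones have \(I_v = 1\), contribute nothing to \(\bigoplus_v I_v\), and contain no \(\sigma_i\) lifting \(\gamma^{p^m}\). It is also only for the ramified ones that your claim ``their number does not change for \(n \geq m\)'' holds. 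Surjectivity of \(\Sigma_n\) then uses the other half of (3), namely that at least one prime ramifies.

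\textbf{The sentence about \(M_n \cap L\).} It should read as follows. Since \(L \subset M_n \subset H\), the extension \(M_n/L\) is unramified. Hence \(\tilde I_v \cap \Gal(M_n/L)\), which is an inertia group of \(M_n/L\), is trivial.

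With these repairs the proof is complete.
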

\begin{proof} We will only prove (4) and (5), since the first three are explicitly stated in the cited references. For the proof of (4), we essentially follow the ideas in \cite[pp. 46--47]{Ochiai2023Iwasawa1}. Let \(H\) be the maximal unramified abelian pro-\(p\) extension of \(L\) in which all real places split completely, for each \(n \geq 0\) put \(E_n = \Gal(H/L_n)\) and \(G_n = \Gal(L/L_n) = G^{p^n}\). The short exact sequence \[1 \to X \simeq \Gal(H/L) \to \Gal(H/F) \to G \to 1\] coming from (2) also gives us the short exact sequence \[1 \to X \to E_n \to G_n \to 1\] of profinite groups for each \(n \geq 0\). This sequence splits, and upon passing to the abelianization, we have the short exact sequence \[0 \to X/\omega_nX \to E_n^{\text{ab}} \to G_n \to 0\,.\]

For a prime \(v\) in \(L_n\) above \(p\), choose a prime \(w\) in \(H\) above \(v\) and let \(u\) be its restriction to \(L\). Let \(\widetilde{I_v}\) be the inertia group \(I(w/v) \leq E_n\) and \(I_v\) the inertia group \(I(u/v) \leq G_n\). Observe that \(\widetilde{I_v}\) only depends on \(w\) up to conjugation in \(E_n\), and \(I_v\) is independent of \(w\) by the fact that \(G_n\) is abelian. Restriction gives us a surjective \(\widetilde{I_v} \to I_v\) whose kernel is the inertia group \(I(w/u) \leq X\) by \cite[Prop. II.9.5]{Neukirch1999}. However, \(H/L\) is totally unramified, so the inertia group \(I(w/u)\) is trivial. In particular, we can lift \(\Sigma_n\) to a map \(\iota_n\colon \bigoplus_{v \mid p} I_v \to E_n^{\text{ab}}\) along the isomorphisms \(\widetilde{I_v} \to I_v\).

By the definition of the inertia subgroups, \(\operatorname{im}(\iota_n)\) is the subgroup of \(E_n^{\text{ab}}\) corresponding to maximal abelian subextension of \(H/L_n\) unramified at all primes above \(p\). By (3), this means that this extension is completely unramified, which by (2) yields a natural isomorphism \[E_n^{\text{ab}}/\operatorname{im}(\iota_n) \simeq \Cl(L_n)(p)\,.\] Applying the snake lemma to the commutative diagram 
\[\begin{tikzcd}
0 \arrow[r] & 0 \arrow[r] \arrow[d]  & \bigoplus_{v \mid p} I_v \arrow[r, "\operatorname{id}"] \arrow[d, "\iota_n"] & \bigoplus_{v \mid p} I_v \arrow[r] \arrow[d, "\Sigma_n"] & 0 \\
0 \arrow[r] & X/\omega_n X \arrow[r] & E_n^{\text{ab}} \arrow[r]                                                    & G_n \arrow[r]                                            & 0
\end{tikzcd}\] gives the long exact sequence \[0 \to \ker \iota_n \to \ker \Sigma_n \to X/\omega_nX \to \Cl(L_n)(p) \to \coker(\Sigma_n) \to 0\,,\] from which (4) follows.

For (5), let \(m\) be such that all ramified primes in \(L/L_m\) are totally ramified, which exists by (3). Now apply \cite[Lem. 13.15]{Washington1997} to the extension \(L/L_m\).
\end{proof}

\section{The Iwasawa rank triangle}\label{sec:iwasawa-triangle}

In this section, we keep the notations of Sections~\ref{sec-rank-functions} and~\ref{sec-iwasawa-algebras}. In particular, \(G = \bbz_p = \overline{\langle \gamma \rangle}\) denotes the infinite pro-cyclic pro-\(p\) group and \(\Lambda = \Zalgebra{G} \simeq \Zalgebra{T}\) is its completed group algebra, where \(T = \gamma - 1\). We also denote by \(\omega_n\) the element \(\gamma^{p^n}-1 \in \Lambda\). We are interested in three families of Sylvester matrix rank functions on the Iwasawa \(\Lambda\), which will form our ``Iwasawa rank triangle''.

\begin{enumerate}[label=(\Alph*)]
    \item The rank \(\rk_\Lambda\) coming from the embedding of \(\Lambda\) into its field of fractions \(\calQ = \operatorname{Frac}(\Lambda)\). On a \(\Lambda\)-module \(M\), it is given by \[\dim_{\Lambda} M = \dim_{\calQ} M \otimes_{\Lambda} \calQ\,.\]
    \item For any integer \(k \geq 1\), the \(\Lambda_{(p)}\)-module \(\Lambda_{(p)}/(p^k)\) has length \(k\) by Lemma~\ref{lem-localization-at-p-structure}, and thus induces a rank function \(\rk_{(p),k}\) on \(\Lambda_{(p)}\) as in Example~\ref{exmp-rank-finite-len}. We also denote by \(\rk_{(p),k}\) its restriction to \(\Lambda\). On a \(\Lambda\)-module \(M\), it is given by \[\dim_{(p),k}(M) = \frac{\len_{\Lambda_{(p)}}(M_{(p)}/p^kM_{(p)})}{k}\,.\]
    \item For any pair of integers \(k \geq 1\) and \(n \geq 0\), the ring \(\Lambda/(\omega_n,p^k)\) is local Artinian of length \(kp^n\) by Lemma~\ref{lem-finite-cyclic-quotient-structure}, and thus we have a rank function \(\rk_{n,k}\) on \(\Lambda/(\omega_n,p^k)\) as in Example~\ref{exmp-rank-finite-len}. We also denote by \(\rk_{n,k}\) its pullback to \(\Lambda\) under the projection \(\Lambda \to \Lambda/(\omega_n,p^k)\), and on a \(\Lambda\)-module \(M\) it is given by \[\dim_{n,k}(M) = \frac{\len_{\Lambda/(\omega_n,p^k)}(M/(\omega_nM + p^kM))}{kp^n} = \frac{\log_p|M/(\omega_nM + p^kM)|}{kp^n}\,.\]
\end{enumerate}

By Proposition~\ref{prop-universal-rank-property}, we have that a sequence \(\rk_i\) of rank functions in \(\bbp(\Lambda)\) converges to \(\rk_\Lambda\) if, and only if, \(\lim_{i\to \infty} \rk_i(x) = 1\) for any non-zero \(x \in \Lambda\). It is not hard to see that this is the case for the families \(\rk_{(p),k}\) and \(\rk_{n,k}\) as \(n\,,k \to \infty\).

\begin{lem}\label{lem-topology-iwasawa-triangle} As rank functions on \(\Lambda\), we have:
\[\lim_{k \to \infty} \rk_{(p),k} = \rk_\Lambda\,,\]
\[\lim_{n \to \infty} \rk_{n,k} = \rk_{(p),k}\,,\]
\[\lim_{k\to \infty} \left(\lim_{n \to \infty} \rk_{n,k}\right) = \rk_\Lambda\,.\]
\end{lem}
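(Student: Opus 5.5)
The plan is to prove the three statements in order, the third being immediate from the first two. Throughout I use that pointwise convergence of matrix rank functions is equivalent to pointwise convergence of the associated module rank functions. This holds because \(\rk(A) = m - \dim(R^m/AR^n)\) for \(A \in \Mat_{m\times n}(R)\). So each claim may be checked either on elements and matrices or on finitely generated modules, whichever is more convenient.

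For the first limit I use Proposition~\ref{prop-universal-rank-property}: it suffices to show \(\rk_{(p),k}(x) \to 1\) for every non-zero \(x \in \Lambda\). By Theorem~\ref{thm-Weierstrass-prep} write \(x = p^a u h\). Then \(uh \notin p\Lambda\), so \(uh\) is a unit in \(\Lambda_{(p)}\), and in \(\Lambda_{(p)}\) we have \(x = p^a\cdot(\text{unit})\). Put \(N = \Lambda_{(p)}/(p^k)\). By Example~\ref{exmp-rank-finite-len}, \(\rk_{(p),k}(x) = \len(xN)/k = \len(p^aN)/k\). For \(k \geq a\), Lemma~\ref{lem-localization-at-p-structure} gives \(\len(p^a N) = k-a\), so \(\rk_{(p),k}(x) = (k-a)/k \to 1\).

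For the second limit, Proposition~\ref{prop-universal-rank-property} does not apply, since the target \(\rk_{(p),k}\) is not the rank of the fraction field. I therefore argue directly on modules. Fix a finitely generated \(\Lambda\)-module \(M\) and put \(N = M/p^kM\), a finitely generated \(\Lambda/(p^k)\)-module. Then \(\dim_{n,k}(M) = \log_p|N/\omega_n N|/(kp^n)\) and \(\dim_{(p),k}(M) = \len_{\Lambda_{(p)}}(N_{(p)})/k\). So it suffices to prove
\[\log_p|N/\omega_nN| = p^n\,\len_{\Lambda_{(p)}}(N_{(p)}) + O(1) \quad (n \to \infty).\]

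\begin{enumerate}
    \item \emph{Modules killed by \(p\).} Such an \(N\) is a finitely generated module over the PID \(\bbf_p[\![T]\!]\), so \(N \simeq \bbf_p[\![T]\!]^r \oplus (\text{finite})\). Modulo \(p\) we have \(\omega_n \equiv T^{p^n}\). Hence \(\dim_{\bbf_p} N/\omega_nN = rp^n + O(1)\), and the \(\omega_n\)-torsion \(N[\omega_n]\) has bounded size. Moreover \(r = \dim_{\bbf_p(\!(T)\!)} N_{(p)} = \len(N_{(p)})\).
    \item \emph{Dévissage.} Since \(\omega_n\) is a non-zero-divisor on \(\Lambda\), a short exact sequence \(0\to N'\to N\to N''\to 0\) gives an exact sequence \(N''[\omega_n] \to N'/\omega_nN' \to N/\omega_nN \to N''/\omega_nN'' \to 0\). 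Thus \(\log_p|N/\omega_nN|\) is additive up to an error at most \(\log_p|N''[\omega_n]|\). Apply this to the filtration \(N \supseteq pN \supseteq \cdots \supseteq p^kN = 0\). Every graded piece is killed by \(p\), so by step 1 each error term is \(O(1)\) and each piece contributes \(p^n r_i + O(1)\).
    \item \emph{Identifying the constant.} Localization at \((p)\) is exact, so \(\sum_i r_i = \len_{\Lambda_{(p)}}(N_{(p)})\) by Lemma~\ref{lem-localization-at-p-structure}.
\end{enumerate}

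Dividing by \(kp^n\) then gives \(\dim_{n,k}(M) \to \dim_{(p),k}(M)\). The main obstacle is precisely this step: controlling the torsion terms \(N''[\omega_n]\) uniformly in \(n\). This is handled by the structure theory over the PID \(\bbf_p[\![T]\!]\), which is much weaker than the pseudo-isomorphism classification over \(\Lambda\).

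The third statement follows by substituting the second limit into the first.
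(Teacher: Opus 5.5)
Your proof is correct. The first and third limits are argued exactly as in the paper, but for the second limit you take a genuinely different route.

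\textbf{The paper's argument (soft).} It uses the compactness of \(\bbp(\Lambda)\) to take a convergent subnet of \((\rk_{n,k})_n\). The Weierstrass bound \(\dim_{n,k}(\Lambda/(x)) \leq \deg h/p^n\) then shows that the limit kills \(\Lambda/(x)\) for every \(x \notin (p)\), so the limit extends to \(\Lambda_{(p)}\) (Example~\ref{exmp-rank-function-loc}). Since \(\Lambda_{(p)}\) is a discrete valuation ring, Smith normal form reduces the identification of the limit to the values on \(p^a\). There \(\rk_{n,k}(p^a) = \max\{0,k-a\}/k\) holds exactly for every \(n\). So the paper only evaluates on cyclic modules and lets the structure of \(\bbp(\Lambda)\) do the rest, in the same spirit as Proposition~\ref{prop-universal-rank-property}.

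\textbf{Your argument (direct).} You verify convergence on every finitely generated module, using the \(p\)-adic filtration and the structure theorem over \(\bbf_p[\![T]\!]\). Your Steps 1--3 are essentially the Euler characteristic computation of Lemma~\ref{lem-euler-characteristics}. That lemma does not depend on the present one, so you could simply have invoked it for \(P = M/p^kM\).

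\textbf{What each buys.} Your route gives an explicit rate, \(|\dim_{n,k}(M) - \dim_{(p),k}(M)| = O(1/p^n)\) for fixed \(M\) and \(k\). It needs no nets, no extension-to-localization criterion and no Smith normal form. The cost is more module-theoretic input, and it front-loads work that the paper only needs later for Theorem~\ref{thm-bottom-edge}.

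A minor point: in Step 2 the remark that \(\omega_n\) is a non-zero-divisor plays no role. The snake lemma applies to multiplication by \(\omega_n\) on any \(\Lambda\)-module.
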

\begin{proof} Note that the third limit follows from the first two. For the first limit, it suffices to show that for each non-zero \(x \in \Lambda\) we have \(\rk_{(p),k}(x) \to 1\) as \(k \to \infty\). By Theorem~\ref{thm-Weierstrass-prep}, we can write \(x = p^auh\) where \(a \geq 0\) is an integer, \(u \in \Lambda^\times\) is a unit and \(h \in \bbz_p[T]\) is a distinguished polynomial. Observe that both \(u\) and \(h\) are units in the ring \(\Lambda_{(p)}\), and therefore for \(k\geq a\):
\begin{align*}
    \rk_{(p),k}(x) & = \rk_{(p),k}(p^a)\\
    &= \frac{\len_{\Lambda_{(p)}}(p^a\Lambda_{(p)}/p^k\Lambda_{(p)})}{k}\\
    &= \frac{k-a}{k}\,. 
\end{align*}
This clearly converges to \(1\) as \(k \to \infty\).

For the second limit, by the compactness of \(\bbp(\Lambda)\) choose a convergent subnet \(\{\rk_i = \rk_{n_i,k}\}_{i \in I}\) with \(k\) fixed, and let \(\rk = \lim_{i \in I} \rk_i\). Denote by \(\dim_i\) and \(\dim\) the associated Sylvester module rank functions. As in Example~\ref{exmp-rank-function-loc}, if we show that \(\dim(\Lambda/(x)) = 0\) for any \(x \in \Lambda \smallsetminus (p)\), then \(\dim\) extends to a rank function on \(\Lambda_{(p)}\). By Theorem~\ref{thm-Weierstrass-prep}, any such \(x\) can be written as \(x = uh\) where \(u \in \Lambda^\times\) is a unit and \(h \in \bbz_p[T]\) is a distinguished polynomial, and moreover \[\dim_i(\Lambda/(x)) = \dim_i(\Lambda/(h)) = \frac{\len_{\Lambda/(\omega_{n_i},p^k)}(\Lambda/(\omega_{n_i},p^k,h))}{kp^{n_i}} = \frac{\log_p |\Lambda/(\omega_{n_i},p^k,h)|}{kp^{n_i}}\,.\] By Proposition~\ref{prop-Weierstrass-div}, \(\Lambda/(h,p^k)\) is a free \(\bbz/p^k\bbz\)-module of rank \(\deg h\). Thus:
\begin{align*}
    \dim_i(\Lambda/(x)) &= \frac{\log_p|\Lambda/(\omega_{n_i},p^k,h)|}{kp^{n_i}}\\
    &\leq \frac{k\deg h}{kp^{n_i}} = \frac{\deg h}{p^{n_i}}\to 0\,.
\end{align*}

Hence, \(\dim\) (and, equivalently, \(\rk\)) extends to \(\Lambda_{(p)}\), which is a principal ideal domain. Since any matrix \(A\) over \(\Lambda_{(p)}\) can be put into a Smith normal form \[UAV = \begin{pmatrix}
    p^{a_1} & 0 & 0 & \cdots & 0 & 0\\
    0 & p^{a_2} & 0 & \cdots & 0 & 0\\
    0 & 0 & p^{a_3} &  \cdots & 0 & 0\\
    \vdots & \vdots & \vdots & \ddots & \vdots & \vdots \\
    0 & 0 & 0 & \cdots & 0 & 0\\
    0 & 0 & 0 & \cdots & 0 & 0
\end{pmatrix}\quad\text{ with }U\,, V\text{ invertible over }\Lambda_{(p)}\,,\] it suffices to prove that \(\lim_{i \in I} \rk_i(p^a) = \rk_{(p),k}(p^a) = \dfrac{\max\{0,k-a\}}{k}\). This is immediate, since
\begin{align*}
    \rk_i(p^a) &= 1 - \frac{\log_p|\Lambda/(\omega_{n_i},p^k,p^a)|}{kp^{n_i}}\\
    &= 1 - \frac{\log_p|\Lambda/(\omega_{n_i},p^{\min\{k,a\}})|}{kp^{n_i}}\\
    &= 1- \frac{\min\{k,a\}p^{n_i}}{kp^{n_i}}\\
    &= \frac{\max\{0,k-a\}}{k}
\end{align*}
is constant as a function of \(i \in I\). Since the convergent subnet \(\{\rk_i\}_{i \in I}\) was arbitrary, we have that \(\rk_{n,k} \to \rk_{(p),k}\) as \(n \to \infty\).
\end{proof}

Using Lemma~\ref{lem-topology-iwasawa-triangle}, we represent in Figure~\ref{fig:Iwasawa-triangle} how our three families of Sylvester rank functions sit inside the space \(\bbp(\Lambda)\). The shape of this figure is our so called \emph{Iwasawa rank triangle}. Our strategy for proving Theorem~\ref{thm-iwasawa-structure-theorem} will be to use the pseudo-metrics \[d_A(\rk,\rk') = |\rk(A) - \rk'(A)|\] of Section~\ref{sec-rank-functions} on the space \(\bbp(\Lambda)\) to measure all three ``sides'' of this triangle, where \(A\) will be an appropriately chosen matrix over \(\Lambda\) related to the class groups \(\Cl(F_i)\).

\begin{figure}[!ht]
    \centering
    \begin{tikzpicture}[
  x=1.25cm,y=1.25cm,
  dot/.style={circle,fill=black,inner sep=1.35pt},
  endpoint/.style={circle,fill=black,inner sep=1.9pt},
  guide/.style={densely dotted,black!35,line width=0.45pt},
  direction/.style={-{Stealth[length=2mm]},line width=0.55pt}
]
  \coordinate (G) at (0,5);
  \coordinate (P) at (0,0);
  \coordinate (R) at (5,0);

  \draw[guide] (P) -- (G);
  \foreach \y in {1.15,2.10,2.85,3.45,3.92,4.28,4.55,4.74}
    \node[dot] at (0,\y) {};

  \draw[guide] (R) .. controls (2.8,1.1) and (0.75,2.5) .. (G);
  \foreach \t in {0.14,0.28,0.42,0.55,0.67,0.77,0.85,0.91,0.95}
    \path (R) .. controls (2.8,1.1) and (0.75,2.5) .. (G)
      node[pos=\t,dot] {};

  \draw[guide] (P) -- (R);
  \foreach \x in {3.65,2.55,1.70,1.08,0.65,0.36,0.17}
    \node[dot] at (\x,0) {};

  \node[endpoint,label=above:{$\rk_{\Lambda}$}] at (G) {};
  \node[endpoint,label=below:{$\rk_{(p),k}$}] at (P) {};
  \node[endpoint,label=below:{$\rk_{n,k}$}] at (R) {};

  \draw[direction] (-0.8,0.5) -- (-0.8,4.5)
    node[midway,left=3pt] {$k\to\infty$};
  \draw[direction] (4.5,-0.85) -- (0.5,-0.85)
    node[midway,below=3pt] {$n\to\infty\quad(k\text{ fixed})$};
  \node at (2.45,3.5) {$n,k\to\infty$};
\end{tikzpicture}
    \caption{The distribution of the rank functions \(\rk_\Lambda\), \(\rk_{(p),k}\) and \(\rk_{n,k}\) in the space \(\bbp(\Lambda)\) which form the Iwasawa rank triangle.}
    \label{fig:Iwasawa-triangle}
\end{figure}

A remarkable fact about the Iwasawa rank triangle in Figure~\ref{fig:Iwasawa-triangle} is that, when measuring its sides under the pseudo-metrics \(d_A\), it always degenerates.

\begin{thm}\label{thm-main-triangle-inequality} Let \(A\) be any matrix over \(\Lambda\) and \(d_A\) be the corresponding pseudo-metric on the space \(\bbp(\Lambda)\).  Then, for any pair \(n\geq 0\) and \(k\geq 1\) of integers we have \[d_A(\rk_\Lambda,\rk_{n,k}) = d_A(\rk_\Lambda, \rk_{(p),k}) + d_A(\rk_{(p),k},\rk_{n,k})\,.\]
\end{thm}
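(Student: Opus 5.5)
The degeneracy of the triangle is equivalent to the ordering
\[\rk_{n,k}(A) \leq \rk_{(p),k}(A) \leq \rk_\Lambda(A)\]
for every matrix \(A\) over \(\Lambda\). Indeed, if both inequalities hold, then \(d_A(\rk_\Lambda,\rk_{n,k}) = \rk_\Lambda(A) - \rk_{n,k}(A)\) splits as \((\rk_\Lambda(A)-\rk_{(p),k}(A)) + (\rk_{(p),k}(A) - \rk_{n,k}(A))\), and each bracket is the corresponding distance. So I will prove the two inequalities separately. Passing to module rank functions via \(\dim(R^m/AR^n) = m - \rk(A)\), they become
\[\dim_\Lambda(M) \leq \dim_{(p),k}(M) \leq \dim_{n,k}(M)\]
for every finitely generated \(\Lambda\)-module \(M\).

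For the first inequality, I will localize at \((p)\). Since \(\Lambda_{(p)}\) is a discrete valuation ring by Lemma~\ref{lem-localization-at-p-structure}, we have \(M_{(p)} \simeq \Lambda_{(p)}^r \oplus \bigoplus_j \Lambda_{(p)}/(p^{a_j})\). Here \(r = \dim_\calQ(M\otimes_\Lambda \calQ) = \dim_\Lambda M\), because \(\calQ\) is also the fraction field of \(\Lambda_{(p)}\). Then \(M_{(p)}/p^kM_{(p)}\) has length \(rk + \sum_j \min\{a_j,k\} \geq rk\). Dividing by \(k\) gives \(\dim_{(p),k}(M) \geq \dim_\Lambda(M)\).

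For the second inequality, put \(N = M/p^kM\). This is a finitely generated module over \(\Lambda/(p^k)\), and \(\dim_{n,k}(M) = \log_p|N/\omega_n N|/(kp^n)\). Meanwhile \(\dim_{(p),k}(M) = \len_{\Lambda_{(p)}}(N_{(p)})/k\). I will argue by dévissage along the filtration \(N \supseteq pN \supseteq \cdots \supseteq p^kN = 0\), or equivalently by a double-counting argument. The base case is a module \(V\) over \(\Lambda/(p) = \bbf_p[\![T]\!]\), which is a PID. There \(V \simeq \bbf_p[\![T]\!]^s \oplus (\text{torsion})\), with \(s\) equal to the dimension over \(\bbf_p(\!(T)\!)\), and the quotient \(V/\omega_nV = V/T^{p^n}V\) has \(\bbf_p\)-dimension at least \(sp^n\), since \(\omega_n \equiv T^{p^n} \pmod p\). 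Hence \(\log_p|V/\omega_n V| \geq p^n \len(V_{(p)})\). For general \(N\), the layers \(p^iN/p^{i+1}N\) are \(\bbf_p[\![T]\!]\)-modules whose lengths after localization add up to \(\len(N_{(p)})\), because localization is exact.

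The main obstacle is that \(N \mapsto \log_p|N/\omega_nN|\) is only right exact. From \(0\to N'\to N\to N''\to 0\) we only get \(\log_p|N/\omega_n N| \leq \log_p|N'/\omega_nN'| + \log_p|N''/\omega_nN''|\), which is the wrong direction for a lower bound. To get around this, I will use instead the inequality (SMod3) in the form \(\dim(N) \geq \dim(N'')\) together with a more careful count. The plan is to pick the quotient \(N'' = N/pN\), which contributes its free rank \(s_0\). I then use that multiplication by \(p^i\) induces surjections \(N/pN \to p^iN/p^{i+1}N\), so the ranks \(s_i\) of the layers are non-increasing. Then I will reduce to a module of the form \(\Lambda/(p^k)\)-free parts: choose elements of \(N\) lifting bases of the free parts so that they generate a submodule mapping to \(N_{(p)}\) with finite-length cokernel of the right type. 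Concretely, I would find \(\Lambda\)-maps \(\bigoplus_i (\Lambda/(p^{c_i}))^{t_i} \to N\) that become isomorphisms after localizing at \((p)\) and whose kernel is killed by some \(h \notin (p)\). Computing \(\log_p|\cdot/\omega_n|\) for these explicit modules gives exactly \(p^n \len(N_{(p)})\). The cokernel and kernel errors I would control by the fact that \(\Lambda/(h,p^k)\) is finite, of size at most \(p^{k\deg h}\) by Proposition~\ref{prop-Weierstrass-div}.

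If the resulting bound carries an error term bounded independently of \(n\), that is not good enough for an exact inequality at every \(n\). In that case I would fall back on an exact argument: prove that \(\log_p|N/\omega_nN| \geq p^n\len(N_{(p)})\) using that \(N/\omega_n N\) surjects onto a quotient computed from the torsion-free-over-\(\bbf_p[\![T]\!]\) pieces, handled layer by layer via the surjection \(N \to N/p^iN\) and induction on \(k\).
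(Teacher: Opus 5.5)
Your reduction to the chain $\rk_{n,k}(A)\le\rk_{(p),k}(A)\le\rk_\Lambda(A)$ is the same as the paper's. Strictly, the triangle equality only says that $\rk_{(p),k}(A)$ lies between the other two values, but you only need sufficiency. Your proof of $\rk_{(p),k}\le\rk_\Lambda$ is correct and more direct than the paper's: you read it off the decomposition of $M_{(p)}$ over the discrete valuation ring $\Lambda_{(p)}$. The paper instead proves the stronger fact that $\rk_\Lambda$ dominates every Sylvester rank function on $\Lambda$, by factoring $A$ through the free lattice $m_A(\calQ^s)\cap\Lambda^r$ of rank $\rk_\Lambda(A)$.

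The genuine gap is the other inequality, $\log_p|N/\omega_nN|\ge p^n\len_{\Lambda_{(p)}}(N_{(p)})$ for $N=M/p^kM$, which carries the content of the theorem. You correctly name the obstruction (coinvariants are only right exact), but none of your suggestions gets past it. Using (SMod3) for $N\to N/pN$ together with $s_0\ge s_1\ge\cdots$ only gives $\log_p|N/\omega_nN|\ge p^ns_0\ge p^n\len(N_{(p)})/k$. The final ``fall back'' is not an argument.

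The lifting map $\phi\colon F=\bigoplus_i(\Lambda/(p^{c_i}))^{t_i}\to N$ can be made to work, but only with the idea you are missing: for a finite $\Lambda$-module $C$ one has $|C[\omega_n]|=|C/\omega_nC|$. With it there is no error term at all.
\begin{itemize}
\item Kernel: $K=\ker\phi$ is finite, as you note, and $K[\omega_n]\subseteq F[\omega_n]=0$ because $p$ is prime in $\Lambda$ and $\omega_n\notin(p)$. Hence $K/\omega_nK=0$ and $\phi(F)/\omega_n\phi(F)\cong F/\omega_nF$, whose length is $p^n\sum_ic_it_i=p^n\len(N_{(p)})$.
\item Cokernel: for the finite module $C=\coker\phi$, the snake lemma gives $|N/\omega_nN|=|\phi(F)/\omega_n\phi(F)|\cdot|C/\omega_nC|/|\mathrm{im}\,\delta|$, with connecting map $\delta\colon C[\omega_n]\to\phi(F)/\omega_n\phi(F)$. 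Since $|\mathrm{im}\,\delta|\le|C[\omega_n]|=|C/\omega_nC|$, the cokernel can only increase $|N/\omega_nN|$.
\end{itemize}
Packaged differently: $\chi_n(P)=\len(P/\omega_nP)-\len(P[\omega_n])$ is additive, vanishes on finite modules, and equals $p^n\len(P_{(p)})$. This is Lemma~\ref{lem-euler-characteristics}, whose proof does not use the theorem. It gives $\len(N/\omega_nN)=p^n\len(N_{(p)})+\len(N[\omega_n])$, and the inequality follows.

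The paper takes yet another route. It restricts scalars to $\Lambda_n=\Zalgebra{\omega_n}$, over which $\Lambda$ is free of rank $p^n$ with residue degree $p^n$ at $(p)$. It then compares minimal valuations of maximal minors of a presentation matrix of $N$: $v_p$ of their constant terms against $v_{(p)}$ of the power series themselves. The key point is that $p^l\mid f(\omega_n)$ forces $p^l\mid f(0)$.
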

\begin{proof} It suffices to prove the inequalities \[\rk_{n,k}(A) \leq \rk_{(p),k}(A) \leq \rk_\Lambda(A)\] for any matrix \(A \in \Mat_{r\times s}(\Lambda)\) and any pair \(n\geq 0\,, k\geq 1\) of integers. We will first prove that \(\rk_\Lambda\) is the \emph{universal} rank function on \(\bbp(\Lambda)\) (\cite[Def. 3.2.1]{Souza2026Thesis}), that is, \(\rk_\Lambda\) is the greatest element of \(\bbp(\Lambda)\) under the pointwise partial order: \(\rk_\Lambda(A) \geq \rk'(A)\) for any matrix \(A\) and any rank function \(\rk' \in \bbp(\Lambda)\) (see also \cite[Exmp. 4.1.5]{Souza2026Thesis}).

Let \(m_A\colon \Lambda^s \to \Lambda^r\) be the map given by left multiplication by \(A\), and also denote by \(m_A^\calQ\colon \calQ^s \to \calQ^r\) its extension to \(\calQ\)-vector spaces. We see \(\Lambda^s\) and \(\Lambda^r\) as naturally embedded inside \(\calQ^s\) and \(\calQ^r\) respectively, such that the restriction of \(m_A^\calQ\) to \(\Lambda^s\) is \(m_A\). Let \(L = m_A(\calQ^s) \cap \Lambda^r\) and \(t = \rk_\Lambda(A)\), so that \(\dim_{\calQ} m_A(\calQ^s) = t\). Since \(\Lambda\) is Noetherian, \(L\) is a finitely generated \(\Lambda\)-submodule of \(\Lambda^r\), and \(L \cap p\Lambda^r = pL\).

Hence, \(L/pL\) is a \(\Lambda/(p)\)-submodule of \((\Lambda/(p))^r\). As \(\Lambda/(p) \simeq \bbf_p[\![T]\!]\) is a discrete valuation ring, \(L/pL\) must be free as a \(\Lambda/(p)\)-module, say of rank \(l\). If we choose lifts \(x_1\,,\ldots\,,x_l\) in \(L\) of a basis of \(L/pL\), by Nakayama's lemma they generate the \(\Lambda\)-module \(L\). Then, the lifts \(x_1\,,\ldots\,,x_l\) are also linearly independent over \(\calQ\): clearing out the denominators and the common powers of \(p\) in any relation \[a_1x_1 + \cdots + a_lx_l = 0\] over \(\calQ\) descends to a linear relation between their images in \(L/pL\) with non-zero coefficients, a contradiction. Hence, \(L\) is a free \(\Lambda\)-module, necessarily of rank \(t = l\). Since \(m_A(\Lambda^s) \leq L \simeq \Lambda^t \leq \Lambda^r\), we get a factorization \(A = BC\) where \(B \in \Mat_{r \times t}(\Lambda)\) and \(C \in \Mat_{t \times s}(\Lambda)\). If \(\rk\) is any Sylvester rank function on \(\Lambda\), by (SMat2) we have \[\rk(A) = \rk(BC) \leq \min\{\rk(B),\rk(C)\} \leq t = \rk_\Lambda(A)\,,\] as desired.

For the other inequality, we consider the subring \(\Lambda_n = \Zalgebra{G^{p^n}} = \Zalgebra{\omega_n}\) of \(\Lambda\). Then, using the coset decomposition \(G = \bigcup_{g \in G/G^{p^n}} gG^{p^n}\), we see that \(\Lambda\) is a free \(\Lambda_n\)-module of rank \(p^n\) under the natural embedding \(\Lambda_n \to \Lambda\), so we can see \(A\) as a matrix of size \(p^nr\times p^ns\) over \(\Lambda_n\). If we take \(\widetilde{M} = \Lambda^r/A\Lambda^s\) and \(M = \widetilde{M}/p^k\widetilde{M} = \Lambda^r/B\Lambda^{s'}\) for some matrix \(B \in \Mat_{r\times s'}(\Lambda)\) we have \[\rk_{n,k}(A) = r - \frac{\len_{\Lambda} (M/\omega_nM)}{kp^n}\,,\quad \rk_{(p),k}(A) = r - \frac{\len_{\Lambda_{(p)}}(M_{(p)})}{k}\,.\] Thus, the inequality \(\rk_{n,k}(A) \leq \rk_{(p),k}(A)\) is equivalent to \begin{equation}\label{eq-len-inequality-1}\len_{\Lambda}(M/\omega_nM) \geq p^n\len_{\Lambda_{(p)}}(M_{(p)})\,.\end{equation}

On one hand, \[\len_{\Lambda}(M/\omega_nM) = \log_p|M/\omega_nM| = \len_{\Lambda_n}(M/\omega_nM) = \len_{\bbz_p}(M/\omega_nM)\,.\] On the other hand, \(\Lambda_{(p)}\) is a free \((\Lambda_n)_{(p)}\)-module of rank \(p^n\) and the corresponding residue field extension \(\bbf_p(\!(T)\!)/\bbf_p(\!(\omega_n)\!)\) has degree \(p^n\), so that \begin{align*}
\len_{(\Lambda_n)_{(p)}}(M_{(p)}) &= \sum_{i=0}^{k-1} \dim_{\bbf_p(\!(\omega_n)\!)} (p^iM_{(p)}/p^{i+1}M_{(p)})\\
&= \sum_{i=0}^{k-1} p^n\dim_{\bbf_p(\!(T)\!)}(p^iM_{(p)}/p^{i+1}M_{(p)})\\
&= p^n\len_{\Lambda_{(p)}}(M_{(p)})\,.
\end{align*} Thus, by the isomorphism \((\Lambda_n)_{(p)} \otimes_{\Lambda_n} \Lambda \simeq \Lambda_{(p)}\), the inequality~(\ref{eq-len-inequality-1}) is equivalent to \begin{equation}\label{eq-len-inequality-2}
    \len_{\Lambda_n}(M/\omega_nM) \geq \len_{(\Lambda_n)_{(p)}}(M_{(p)})\,.
\end{equation}

For each square submatrix \(C\) of \(B\) (again, seen as a matrix over \(\Lambda_n\) of size \(p^nr \times p^ns'\)), its determinant \((\det C)(\omega_n)\) is a power series over \(\bbz_p\) in the variable \(\omega_n\). Since both \(\Lambda_n/(\omega_n) \simeq \bbz_p\) and \((\Lambda_n)_{(p)}\) are discrete valuation rings with respective valuations \(v_p\) and \(v_{(p)}\), by taking Smith normal forms we have the identities: \[\len_{\bbz_p}(M/\omega_nM) = \min\{v_p((\det C)(0)) \colon C\text{ is a maximal square submatrix of }B\}\,,\]
\[\len_{(\Lambda_n)_{(p)}}(M_{(p)}) = \min\{v_{(p)}((\det C)(\omega_n))\colon C\text{ is a maximal square submatrix of }B\}\,.\] The inequality~(\ref{eq-len-inequality-2}) now follows, since \(v_{(p)}((\det C)(\omega_n)) = l\) implies that \(p^l\) divides all the coefficients of \((\det C)(\omega_n)\), so in particular \(p^l\) divides its constant coefficient \((\det C)(0)\).
\end{proof}

For finitely generated torsion \(\Lambda\)-modules \(M\), Theorem~\ref{thm-main-triangle-inequality} already gives some information on the growth of coinvariant quotients:

\begin{cor}\label{cor-Iwasawa-for-torsion} Let \(M \simeq \Lambda^r/A\Lambda^s\) be a finitely generated torsion \(\Lambda\)-module. Then, for any integer \(n \geq 0\) and \(\mu = \len_{\Lambda_{(p)}}(M_{(p)})\) we have \[\log_p|M/(\omega_nM + p^kM)| = \mu p^n + kp^nd_A(\rk_{(p),k},\rk_{n,k})\] whenever \(k \geq 1\) is large enough.
\end{cor}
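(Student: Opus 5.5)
Since \(M\) is torsion, \(\rk_\Lambda(A) = r\), because \(\dim_\Lambda M = r - \rk_\Lambda(A) = 0\). I will unwind the definitions of the two pseudo-distances in terms of module lengths. By definition,
\[\rk_{n,k}(A) = r - \dim_{n,k}(M) = r - \frac{\log_p|M/(\omega_nM+p^kM)|}{kp^n},\qquad \rk_{(p),k}(A) = r - \frac{\len_{\Lambda_{(p)}}(M_{(p)}/p^kM_{(p)})}{k}.\]
Theorem~\ref{thm-main-triangle-inequality} gives \(\rk_{n,k}(A)\leq \rk_{(p),k}(A)\). Hence
\[d_A(\rk_{(p),k},\rk_{n,k}) = \frac{\log_p|M/(\omega_nM+p^kM)|}{kp^n} - \frac{\len_{\Lambda_{(p)}}(M_{(p)}/p^kM_{(p)})}{k}.\]
Multiplying through by \(kp^n\) and rearranging, the claimed identity becomes equivalent to
\[\len_{\Lambda_{(p)}}(M_{(p)}/p^kM_{(p)}) = \mu = \len_{\Lambda_{(p)}}(M_{(p)})\]
for all large \(k\).

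To prove this, I note that \(M_{(p)} = M\otimes_\Lambda \Lambda_{(p)}\) is a finitely generated \(\Lambda_{(p)}\)-module. It is torsion, since every element of \(M\) is killed by a nonzero element of \(\Lambda\), and \(\Lambda\) embeds in \(\Lambda_{(p)}\). Lemma~\ref{lem-localization-at-p-structure} then provides an integer \(a\geq 0\) with \(p^aM_{(p)}=0\), and shows that \(\mu\) is finite. Consequently, for every \(k\geq a\) we have \(p^kM_{(p)} = 0\), so \(M_{(p)}/p^kM_{(p)} = M_{(p)}\), and the required length equality holds. Localization is exact, so \((M/p^kM)_{(p)} = M_{(p)}/p^kM_{(p)}\), which shows the definition of \(\dim_{(p),k}\) is being read consistently. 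Substituting back gives the formula for all \(k\geq a\).

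I expect no serious obstacle. The only points needing care are the sign of \(d_A\), which is supplied by Theorem~\ref{thm-main-triangle-inequality}, and the torsion property of \(M_{(p)}\), which is what guarantees finite length and the stabilization at \(k\geq a\). It is also worth noting that the threshold \(a\) is independent of \(n\), so ``\(k\) large enough'' is uniform in \(n\).
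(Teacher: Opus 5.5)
Your proof is correct and is essentially the paper's argument. The paper computes the two sides through \(\rk_\Lambda\), namely \(\log_p|M/(\omega_nM+p^kM)|/(kp^n)\) and \(\mu/k\) for \(k\ge k_0\), and combines them via the degenerate triangle of Theorem~\ref{thm-main-triangle-inequality}; you instead compute the bottom side directly from the inequality \(\rk_{n,k}(A)\le\rk_{(p),k}(A)\) established there, and then use the same stabilization \(p^aM_{(p)}=0\) to finish.
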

\begin{proof} Note that if \(\rk\,,\rk'\) are Sylvester matrix rank functions on \(\bbp(\Lambda)\) with associated Sylvester module rank functions \(\dim\,,\dim'\), then \[d_A(\rk,\rk') =|\rk(A) - \rk'(A)| = |\dim(M) - \dim'(M)|\,.\] Therefore, given that \(\dim_\Lambda(M) = 0\), we get: \[d_A(\rk_\Lambda,\rk_{n,k}) = |\dim_\Lambda(M) - \dim_{n,k}(M)| = \frac{\log_p|M/(\omega_nM + p^kM)|}{kp^n}\,.\] Since \(M\) is torsion, then so is its localization \(M_{(p)}\). Over the discrete valuation ring \(\Lambda_{(p)}\), this means that \(p^{k_0}M_{(p)} = 0\) for some \(k_0 \geq 0\). Hence, for \(k \geq k_0\) we have 
\begin{align*}
    d_A(\rk_\Lambda, \rk_{(p),k}) &= |\dim_{\Lambda}(M) - \dim_{(p),k}(M)|\\
    &=\frac{\len_{\Lambda_{(p)}}(M_{(p)}/p^kM_{(p)})}{k}\\
    &= \frac{\len_{\Lambda_{(p)}}(M_{(p)})}{k} = \frac{\mu}{k}\,.
\end{align*}
The statement now follows from Theorem~\ref{thm-main-triangle-inequality}.
\end{proof}

It now only remains to estimate the quantity \(d_A(\rk_{(p),k}, \rk_{n,k})\) in the statement of Corollary~\ref{cor-Iwasawa-for-torsion}, which we will be able to do very explicitly if we assume that the module \(M\) presented by \(A\) has finite coinvariants. We start with a couple of lemmmas.

\begin{lem}\label{lem-maximal-finite-submodule} Let \(M\) be a finitely generated \(\Lambda\)-module, for each \(j \geq 0\) let \[M[(T,p)^j] = \left\{x \in M\colon (T,p)^jx = 0\right\}\] be the \(\Lambda\)-submodule of elements annihilated by the ideal \((T,p)^j\). Then, the submodule \[F(M) = \bigcup_{j \geq 1} M[(T,p)^j]\] is finite and contains every finite submodule of \(M\). Moreover, there exists \(n_0\) such that for any \(n \geq n_0\) we have \(\omega_n F(M) = 0\).
\end{lem}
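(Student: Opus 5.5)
The plan is to use the Noetherian property of \(\Lambda\) together with the fact that \(\Lambda\) is local with finite residue field \(\Lambda/(T,p)\simeq\bbf_p\).

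\textbf{The chain stabilizes.} The submodules \(M[(T,p)^j]\) form an increasing chain of \(\Lambda\)-submodules of \(M\). Since \(\Lambda\) is Noetherian and \(M\) is finitely generated, \(M\) is a Noetherian module. Hence the chain stabilizes, and \(F(M)=M[(T,p)^{j_0}]\) for some \(j_0\geq 1\). In particular, \(F(M)\) is a finitely generated \(\Lambda\)-module annihilated by \((T,p)^{j_0}\).

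\textbf{Finiteness.} The module \(F(M)\) is therefore a finitely generated module over \(\Lambda/(T,p)^{j_0}\). This ring is finite, because it has a finite filtration by the powers \((T,p)^i/(T,p)^{i+1}\). Each of these quotients is a finite-dimensional \(\bbf_p\)-vector space, spanned by the monomials \(p^aT^b\) with \(a+b=i\). A finitely generated module over a finite ring is finite, so \(F(M)\) is finite.

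\textbf{Maximality.} Let \(N\leq M\) be a finite submodule. Then \(N\) has finite length as a \(\Lambda\)-module. Its only composition factor is the simple module \(\Lambda/(T,p)\), since \(\Lambda\) is local with maximal ideal \((T,p)\). So if \(N\) has length \(\ell\), then \((T,p)^{\ell}N=0\). This gives \(N\subseteq M[(T,p)^{\ell}]\subseteq F(M)\). Alternatively, one can use Nakayama's lemma: the chain \((T,p)^jN\) is a decreasing chain of subgroups of a finite group, so it stabilizes at some \(N'\) with \((T,p)N'=N'\), and Nakayama forces \(N'=0\).

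\textbf{Annihilation by \(\omega_n\).} It suffices to show that \(\omega_n\in(T,p)^{j_0}\) for all large \(n\). The plan is to prove inductively that \(\omega_n\in(T,p)^{n+1}\). The base case is \(\omega_0=T\). For the inductive step, write \(\omega_{n+1}=\omega_n\cdot\nu\) with
\[\nu=\sum_{i=0}^{p-1}(1+\omega_n)^i .\]
Modulo \(\omega_n\), the element \(\nu\) is congruent to \(p\), so \(\nu\in(p,\omega_n)\subseteq(T,p)\). Therefore \(\omega_{n+1}\in(T,p)^{n+1}(T,p)=(T,p)^{n+2}\). Taking \(n_0=j_0\) then gives \(\omega_nF(M)=0\) for all \(n\geq n_0\). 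This step is the only one that requires a small computation, and I expect it to be the main point to check.
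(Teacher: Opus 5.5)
Your proof is correct and follows the paper's argument: the chain stabilizes because \(M\) is Noetherian, \(F(M)\) is finite as a module over the finite ring \(\Lambda/(T,p)^{j_0}\), and any finite submodule is killed by a power of \((T,p)\) (you argue via composition factors, the paper via the finite-index annihilator). The only difference is the last step: you prove \(\omega_n \in (T,p)^{n+1}\) by induction from \(\omega_{n+1} = \omega_n\sum_{i=0}^{p-1}(1+\omega_n)^i\), whereas the paper simply cites \(\omega_n \to 0\) in the \((T,p)\)-adic topology.
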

\begin{proof} Since \(M\) is Noetherian, the chain \(M[(T,p)^j]\) stabilizes. Hence, there exists \(j\geq 1\) such that \(F(M) = M[(T,p)^j]\). As this is a finitely generated module over the finite ring \(\Lambda/(T,p)^j\), it is finite. Conversely, if \(N \leq M\) is a finite submodule, then its annihilator \(\operatorname{Ann}_\Lambda(N)\) is a finite index ideal of \(\Lambda\), and therefore contains \((T,p)^i\) for some \(i\geq 0\). The final assertion follows from the fact that \(\omega_n \to 0\) in the profinite topology of \(\Lambda\), which coincides with its \((T,p)\)-adic topology.
\end{proof}

\begin{lem}\label{lem-euler-characteristics} Let \(P\) be a finitely generated \(\Lambda\)-module annihilated by a power of \(p\), and denote by \(P[\omega_n]\) the submodule of \(P\) annihilated by \(\omega_n\). Then, for every \(n \geq 0\), both \(P/\omega_n P\) and \(P[\omega_n]\) are finite and \[\len_{\bbz_p}(P/\omega_nP) - \len_{\bbz_p}(P[\omega_n]) = p^n\len_{\Lambda_{(p)}}(P_{(p)})\,.\] Moreover, \(P[\omega_n] = F(P)\) for sufficiently large \(n\).
\end{lem}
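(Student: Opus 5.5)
The plan is to reduce to the case of a module killed by \(p\), using additivity of both sides in short exact sequences, and then to handle that case as a finitely generated module over the discrete valuation ring \(\bbf_p[\![T]\!]\).

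\emph{Finiteness.} Since \(p^aP=0\) and \(\Lambda/(p^a,\omega_n)\) is finite by Lemma~\ref{lem-finite-cyclic-quotient-structure}, the module \(P/\omega_nP\) is a finitely generated module over a finite ring, hence finite. For \(P[\omega_n]\), I would show that it is annihilated by a power of \((T,p)\). The element \(\omega_n\) has image \(T^{p^n}\) in \(\bbf_p[\![T]\!]\), so \(T^{p^n}\in(\omega_n,p)\). It follows that \((p,T)^N\subseteq(p,\omega_n)\) for large \(N\), and then \((p,T)^{aN'}\subseteq (p^a,\omega_n)\), which kills \(P[\omega_n]\). Thus \(P[\omega_n]\subseteq F(P)\), which is finite by Lemma~\ref{lem-maximal-finite-submodule}.

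\emph{Additivity.} Define \(\chi_n(P)=\len_{\bbz_p}(P/\omega_nP)-\len_{\bbz_p}(P[\omega_n])\). The snake lemma applied to multiplication by \(\omega_n\) on a short exact sequence \(0\to P'\to P\to P''\to0\) of \(p\)-power torsion modules gives a six-term exact sequence of finite groups. Hence \(\chi_n\) is additive. The right-hand side \(p^n\len_{\Lambda_{(p)}}(P_{(p)})\) is additive because localization is exact and length is additive. Using the filtration \(P\supseteq pP\supseteq\cdots\supseteq p^aP=0\), it therefore suffices to treat modules with \(pP=0\).

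\emph{The case \(pP=0\).} Now \(P\) is a finitely generated module over the DVR \(\bbf_p[\![T]\!]\), so \(P\simeq \bbf_p[\![T]\!]^{d}\oplus Q\) with \(Q\) finite. Here \(d=\dim_{\bbf_p(\!(T)\!)}P_{(p)}=\len_{\Lambda_{(p)}}(P_{(p)})\) by Lemma~\ref{lem-localization-at-p-structure}. Multiplication by \(\omega_n\) on \(P\) is multiplication by \(T^{p^n}\). On a finite module the kernel and cokernel of an endomorphism have the same order, so \(\chi_n(Q)=0\). On \(\bbf_p[\![T]\!]\), multiplication by \(T^{p^n}\) is injective with cokernel of \(\bbf_p\)-dimension \(p^n\). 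This gives \(\chi_n(P)=p^nd\), as required.

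\emph{Last claim.} We already have \(P[\omega_n]\subseteq F(P)\) for every \(n\). By Lemma~\ref{lem-maximal-finite-submodule}, \(\omega_nF(P)=0\) for \(n\ge n_0\), which gives \(F(P)\subseteq P[\omega_n]\) and hence equality.

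The main obstacle I expect is the finiteness of \(P[\omega_n]\) before any reduction. The argument above, placing \(\omega_n\) and \(p\) together in an ideal containing a power of the maximal ideal, is what I would check carefully. The dévissage step is routine once finiteness is known for every graded piece, and finiteness passes to submodules and quotients in the six-term sequence.
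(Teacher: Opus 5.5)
Your proposal is correct and follows essentially the same route as the paper: additivity of \(\chi_n\) via the snake lemma, d\'evissage along the pieces \(p^iP/p^{i+1}P\), and the computation over \(\bbf_p[\![T]\!]\) using that an endomorphism of a finite module has kernel and cokernel of equal length. The only differences are cosmetic. You use the direct-sum structure theorem where the paper uses an embedding \(\bbf_p[\![T]\!]^{s_i}\to Q_i\) with finite cokernel. Also, the finiteness of \(P[\omega_n]\), which you flagged as the main obstacle, is immediate in the paper: \(P[\omega_n]\) is a finitely generated module (\(\Lambda\) being Noetherian) over the finite ring \(\Lambda/(p^a,\omega_n)\). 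Your annihilator argument is nonetheless valid and has the advantage of giving \(P[\omega_n]\subseteq F(P)\) directly for every \(n\).
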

\begin{proof} Let \(k\geq 1\) be such that \(p^k P = 0\). Then, both \(P/\omega_n P\) and \(P[\omega_n]\) are finitely generated modules over the finite ring \(\Lambda/(\omega_n,p^k) \simeq (\bbz/p^k\bbz)[C_{p^n}]\), and therefore are also finite. Denoting by \(\chi_n(P) = \len_{\bbz_p}(P/\omega_n P) - \len_{\bbz_p}(P[\omega_n])\), we see that \(\chi_n(P)\) is additive under short exact sequences: if \[0 \to P_1 \to P_2 \to P_3 \to 0\] is a short exact sequence of finitely generated \(\Lambda\)-modules annihilated by a power of \(p\), then applying the snake lemma to the commutative diagram \[\begin{tikzcd}
0 \arrow[r] & P_1 \arrow[d, "\omega_n\cdot"'] \arrow[r] & P_2 \arrow[d, "\omega_n\cdot"'] \arrow[r] & P_3 \arrow[d, "\omega_n\cdot"'] \arrow[r] & 0 \\
0 \arrow[r] & P_1 \arrow[r]                             & P_2 \arrow[r]                             & P_3 \arrow[r]                             & 0
\end{tikzcd}\] yields the exact sequence \[0 \to P_1[\omega_n] \to P_2[\omega_n] \to P_3[\omega_n] \to P_1/\omega_nP_1 \to P_2/\omega_nP_2 \to P_3/\omega_n P_3 \to 0\,.\] The alternating sum of their lengths is zero, and thus \(\chi_n(P_2) = \chi_n(P_1) + \chi_n(P_3)\).

For each \(i\), let \(Q_i = p^iP/p^{i+1}P\), so that \(\chi_n(P) = \sum_{i=0}^{k-1} \chi_n(Q_i)\). Observe that each \(\omega_n\) acts on \(Q_i\) as multiplication by \(T^{p^n}\). Each \(Q_i\) is a finitely generated \(\bbf_p[\![T]\!]\)-module, and if we write \(s_i = \dim_{\bbf_p(\!(T)\!)} Q_i \otimes_{\bbf_p[\![T]\!]} \bbf_p(\!(T)\!)\) we have an embedding \(\bbf_p[\![T]\!]^{s_i} \to Q_i\) with finite cokernel. Hence:
\begin{align*}
    \chi_n(Q_i) &= \chi_n(\bbf_p[\![T]\!]^{s_i}) + \chi_n(Q_i/\bbf_p[\![T]\!]^{s_i})\\
    &= p^ns_i\,,
\end{align*} where in the last equality we use that any endomorphism of a finite length module has kernel and cokernel with same lengths. Thus: \[\chi_n(P) = \sum_{i=0
}^{k-1} \chi_n(Q_i) = p^n \sum_{i=0}^{k-1}s_i = p^n \len_{\Lambda_{(p)}}(P_{(p)})\] by Lemma~\ref{lem-localization-at-p-structure}. The final claims follows from Lemma~\ref{lem-maximal-finite-submodule}.
\end{proof}

\begin{lem}[{cf. \cite[Lem. 2.4.6]{sharifi-iwasawa}, \cite[Lem. 2.48]{Ochiai2023Iwasawa1}}]\label{lem-scaling-lemma} Let \(L\) be a finitely generated \(\Lambda\)-module that is a free \(\bbz_p\)-module of finite rank, and assume that some power of \(T\) annihilates \(L/pL\). Then, there exists an integer \(n_0\) such that for all \(n \geq n_0\) we have \(\omega_{n+1}L = p\omega_nL\).
\end{lem}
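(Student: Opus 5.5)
The plan is to factor \(\omega_{n+1}\) through \(\omega_n\) and show that the cofactor acts on \(L\) as \(p\) times an automorphism once \(n\) is large. The factorization is
\[\omega_{n+1} = \omega_n\,\nu_n,\qquad \nu_n = \sum_{i=0}^{p-1}(1+\omega_n)^i = p + \sum_{j=1}^{p-2}\binom{p}{j+1}\omega_n^{j} + \omega_n^{p-1},\]
which holds because \((1+T)^{p^{n+1}} = (1+\omega_n)^p\). If we find \(n_0\) and, for \(n\geq n_0\), a \(\Lambda\)-linear automorphism \(u_n\) of \(L\) with \(\nu_n|_L = p\,u_n\), then
\[\omega_{n+1}L = \omega_n\nu_nL = p\,\omega_n u_nL = p\,\omega_nL.\]
The middle step uses that \(u_n\) is built from the action of \(\Lambda\), hence commutes with \(\omega_n\). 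So everything reduces to proving \(\nu_n|_L\in p\cdot\operatorname{Aut}(L)\).

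The first step is to show \(\omega_nL\subseteq pL\) for large \(n\). By hypothesis \(T^NL\subseteq pL\) for some \(N\). Since \(\omega_n\equiv T^{p^n}\pmod p\) in \(\Lambda\), we get \(\omega_nL\subseteq T^{p^n}L+pL\subseteq pL\) as soon as \(p^n\geq N\). Because \(L\) is \(\bbz_p\)-torsion-free, \(\varphi_n=\omega_n/p\) is then a well-defined endomorphism of \(L\), and it lies in the image of \(\Lambda\) acting on \(L\) (as a limit of polynomials in \(\gamma\)). It commutes with everything in that image, and \(\operatorname{End}_{\bbz_p}(L)\simeq\Mat_d(\bbz_p)\) is \(p\)-adically complete.

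Now split by \(p\).
\begin{itemize}
\item For \(p\) odd, write \(\nu_n|_L = p\bigl(1 + \sum_{j=1}^{p-2}\tfrac{1}{p}\binom{p}{j+1}\omega_n^j + p^{p-2}\varphi_n^{p-1}\bigr)\). Every term after the \(1\) lies in \(p\operatorname{End}(L)\): the middle terms because \(\omega_n\in p\operatorname{End}(L)\), the last because \(p-2\geq1\). So the bracket is \(1\) plus a topologically nilpotent element, hence invertible by the geometric series. Take this as \(u_n\).
\item For \(p=2\) we have \(\nu_n = 2+\omega_n = 2(1+\varphi_n)\), so we need \(\varphi_n\in 2\operatorname{End}(L)\), i.e. \(\omega_nL\subseteq 4L\). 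This follows by bootstrapping one step. If \(\omega_{n_1}L\subseteq 2L\), then \(\omega_{n_1+1}L=\omega_{n_1}(2+\omega_{n_1})L\subseteq\omega_{n_1}(2L)\subseteq4L\), and the same holds for all larger \(n\), since \(\omega_{n+1}\) is a \(\Lambda\)-multiple of \(\omega_n\). Take \(n_0=n_1+1\).
\end{itemize}

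The main (mild) obstacle is the \(p=2\) case, where the top term \(\omega_n^{p-1}\) is not automatically divisible by \(p^2\). The bootstrapping above handles it.
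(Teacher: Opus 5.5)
Your proof is correct and follows the paper's argument. Your \(\nu_n\) is exactly the paper's \(S_n=\operatorname{id}+F^{p^n}+\cdots+F^{(p-1)p^n}\), written as \(p\) times an automorphism. The paper gets \(S_n\equiv p\operatorname{id}\pmod{p^2}\) uniformly from \(\omega_nL\subseteq p^2L\), which it asserts without detail; you instead use only \(\omega_nL\subseteq pL\) together with \(p\mid\binom{p}{j+1}\) for \(1\le j\le p-2\) when \(p\) is odd, and you spell out the one-step bootstrap to \(4L\) for \(p=2\).

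One harmless slip: \(\omega_n/p\) need not lie in the image of \(\Lambda\) in \(\End_{\bbz_p}(L)\) at your threshold. For example, let \(T\) act on \(\bbz_p^2\) as \(pE\) with \(E\neq0=E^2\), and take \(n=0\). Nothing depends on this, because commutation with \(\Lambda\) follows from \(\bbz_p\)-torsion-freeness. In any case \(\omega_{n+1}L=\omega_n(\nu_nL)=\omega_n(pL)\) needs no commutation at all.
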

\begin{proof} Let \(F\colon L \to L\) be the endomorphism induced by the action of \(1 +T\). Since the action of \(T\) is nilpotent modulo \(p\), we know that \(F^{p^{n_0}} \equiv \operatorname{id} \pmod{p^2}\) for some integer \(n_0 \geq 1\). For any \(n\geq n_0\), let \[S_n = \operatorname{id} + F^{p^n} + \cdots + F^{(p-1)p^n} \in \End_{\bbz_p}(L)\,.\] Then, \(S_n \equiv p\operatorname{id} \pmod{p^2}\), and so we can write \(S_n = pU_n\) with \(U_n \equiv \operatorname{id} \pmod{p}\). In particular, \(U_n\) is an automorphism of \(L\), and we have \[(F^{p^{n+1}}-\operatorname{id}) = (F^{p^n}-\operatorname{id})S_n = p(F^{p^n}-\operatorname{id})U_n\,.\]
Hence:
\begin{align*}
    \omega_{n+1}L &= (F^{p^{n+1}}-\operatorname{id})L\\
    &= p(F^{p^n}-\operatorname{id})U_nL\\
    &= p\omega_nL\,.\qedhere
\end{align*}
\end{proof}

We are now ready to finish estimating the bottom length of the Iwasawa triangle.

\begin{thm}\label{thm-bottom-edge} Let \(M \simeq \Lambda^r/A\Lambda^s\) be a finitely generated torsion \(\Lambda\)-module whose coinvariants \(M/\omega_nM\) are finite for all sufficiently large \(n\). Then, there exist integers \(\lambda \geq 0\), \(\nu\) and \(n_0\) such that for all \(n \geq n_0\) and all sufficiently large \(k\) (depending on \(n\)) we have \[d_A(\rk_{(p),k},\rk_{n,k}) = \frac{\lambda n + \nu}{kp^n}\,.\]
\end{thm}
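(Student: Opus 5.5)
The plan is to unwind the definitions, reduce the statement to an exact formula for \(\log_p|M/\omega_nM|\), and then split \(M\) into its \(p\)-power torsion part and a \(\bbz_p\)-free quotient. Each piece is handled by one of the lemmas proved just before the statement.

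\emph{Reduction.} By the computation in the proof of Corollary~\ref{cor-Iwasawa-for-torsion}, for \(k\) large we have
\[d_A(\rk_{(p),k},\rk_{n,k}) = \frac{\log_p|M/(\omega_nM+p^kM)|}{kp^n} - \frac{\mu}{k}, \qquad \mu=\len_{\Lambda_{(p)}}(M_{(p)}).\]
Fix \(n\) with \(M/\omega_nM\) finite. Then \(M/\omega_nM\) is a finite \(p\)-group, so it is killed by \(p^k\) for \(k\) large, and \(M/(\omega_nM+p^kM)=M/\omega_nM\). It therefore suffices to show that
\[e_n:=\log_p|M/\omega_nM| = \mu p^n+\lambda n+\nu \quad\text{for all } n\ge n_0,\]
with \(\lambda\ge0\) and \(\nu\) integers.

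\emph{Dévissage.} Let \(P=\bigcup_j M[p^j]\) be the \(p\)-power torsion submodule and \(L=M/P\). Since \(M\) is Noetherian, \(P\) is finitely generated and killed by some \(p^a\). Moreover \(P_{(p)}=M_{(p)}\), because \(L_{(p)}\) is both torsion and \(p\)-torsion-free over the DVR \(\Lambda_{(p)}\), hence zero. So \(\len_{\Lambda_{(p)}}(P_{(p)})=\mu\).

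\emph{Structure of \(L\).} Since \(L_{(p)}=0\) and \(L\) is finitely generated, a single element \(x\notin(p)\) annihilates \(L\). By Theorem~\ref{thm-Weierstrass-prep}, \(x=uh\) with \(h\) distinguished. Then \(L\) is a quotient of \((\Lambda/(h))^r\), so it is a finitely generated \(\bbz_p\)-module by Proposition~\ref{prop-Weierstrass-div}. As it has no \(p\)-torsion, it is free of some rank \(\lambda\). Since \(h\equiv T^{\deg h}\pmod p\), some power of \(T\) kills \(L/pL\), so Lemma~\ref{lem-scaling-lemma} applies: \(\omega_{n+1}L=p\,\omega_nL\) for \(n\) large.

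\emph{Snake lemma.} Multiplication by \(\omega_n\) on \(0\to P\to M\to L\to0\) gives the exact sequence
\[L[\omega_n]\to P/\omega_nP\to M/\omega_nM\to L/\omega_nL\to0.\]
For \(n\) large, \(L/\omega_nL\) is finite (a quotient of \(M/\omega_nM\)). Since \(L\cong\bbz_p^\lambda\), the \(\bbz_p\)-linear endomorphism \(\omega_n\) has finite cokernel, hence nonzero determinant, hence \(L[\omega_n]=0\). Therefore
\[e_n=\len_{\bbz_p}(P/\omega_nP)+\len_{\bbz_p}(L/\omega_nL).\]

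\emph{The \(P\)-term.} By Lemma~\ref{lem-euler-characteristics}, \(\len(P/\omega_nP)=\mu p^n+\log_p|F(P)|\) for \(n\) large, since eventually \(P[\omega_n]=F(P)\).

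\emph{The \(L\)-term.} Because \(\omega_n\) is injective on \(L\), the lattice \(\omega_nL\) has rank \(\lambda\). Using \(\omega_{n+1}L=p\,\omega_nL\),
\[\len(L/\omega_{n+1}L)=\len(L/\omega_nL)+\len(\omega_nL/p\,\omega_nL)=\len(L/\omega_nL)+\lambda.\]
Hence \(\len(L/\omega_nL)=\lambda n+c\) for \(n\ge n_0\), for some integer \(c\).

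Combining the two terms gives the claimed formula with \(\nu=c+\log_p|F(P)|\).

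\emph{Main obstacle.} The delicate step is the structural claim about \(L\): that it is \(\bbz_p\)-free of finite rank, has \(T\) nilpotent on \(L/pL\), and satisfies \(L[\omega_n]=0\). This is exactly what is needed to invoke Lemma~\ref{lem-scaling-lemma} and to kill the connecting map in the snake lemma. I would handle it, as above, through \(L_{(p)}=0\) and Weierstrass preparation. The remaining steps are routine bookkeeping of lengths.
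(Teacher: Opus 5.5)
Your proposal is correct and follows essentially the same route as the paper. Both arguments use the decomposition \(0\to P\to M\to L\to 0\) into the \(p\)-power torsion part and a \(\bbz_p\)-free quotient, and both use injectivity of \(\omega_n\) on \(L\) to split \(\log_p|M/\omega_nM|\) additively. The \(P\)-term \(\mu p^n+\log_p|F(P)|\) then comes from Lemma~\ref{lem-euler-characteristics}, and the \(L\)-term \(\lambda n+c\) from Lemma~\ref{lem-scaling-lemma}.

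The differences are only cosmetic. You get an annihilator of \(L\) outside \((p)\) from \(L_{(p)}=0\), while the paper applies Weierstrass preparation to an arbitrary annihilator and uses \(\bbz_p\)-torsion-freeness. You also resolve the absolute value in \(d_A\) through the triangle equality, whereas the paper computes it directly.
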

\begin{proof} Observe that the hypothesis imply that \(M/\omega_nM\) is finite for \emph{every} \(n\geq 1\). For each \(i \geq 0\), let \(M[p^i]\) denote the \(\Lambda\)-submodule of \(M\) consisting of elements annihilated by \(p^i\), and set \[P = \bigcup_{i\geq 1} M[p^i]\,,\quad L = M/P\,.\] Then, \(L\) is torsion-free as a \(\bbz_p\)-module. Moreover, it is also finitely generated over \(\bbz_p\): it is also torsion and finitely generated as a \(\Lambda\)-module, so its annihilator \(\operatorname{Ann}_\Lambda(L)\) is non-zero. Taking any non-zero \(f \in \operatorname{Ann}_\Lambda(L)\) and writing \(f = p^auh\) by Theorem~\ref{thm-Weierstrass-prep} where \(a \geq 0\), \(u \in \Lambda^\times\) and \(h \in \bbz_p[T]\) is a distinguished polynomial, we see that \[fL = 0 \implies hL = 0\] by torsion-freeness over \(\bbz_p\). Hence, \(L\) is a quotient of \((\Lambda/(h))^r\), which is finitely generated over \(\bbz_p\) by Proposition~\ref{prop-Weierstrass-div}. Observe as well that some power of \(T\) acts trivially on \(L/pL\), as \(h \equiv T^{\deg h} \pmod{p}\).  Write \(L \simeq_{\bbz_p} \bbz_p^\lambda\) for \(\lambda \geq 0\), and observe that the inclusion \(P \to M\) induces an isomorphism \(P_{(p)} \simeq M_{(p)}\) of \(\Lambda_{(p)}\)-modules as \(h\) is a unit in \(\Lambda_{(p)}\).

Since \(L/\omega_n L\) is finite and \(L\) is a finitely generated free \(\bbz_p\)-module, we see that \(\ker(L \overset{\omega_n \cdot}{\to} L) = 0\). Applying the snake lemma to the commutative diagram \[\begin{tikzcd}
0 \arrow[r] & P \arrow[d, "\omega_n\cdot"'] \arrow[r] & M \arrow[d, "\omega_n\cdot"'] \arrow[r] & L \arrow[d, "\omega_n\cdot"'] \arrow[r] & 0 \\
0 \arrow[r] & P \arrow[r]                             & M \arrow[r]                             & L \arrow[r]                             & 0
\end{tikzcd}\] then yields the exact sequence \[0 \to P/\omega_nP \to M/\omega_nM \to L/\omega_nL \to 0\,.\] By the additivity of length under short exact sequences, we get \[\len_{\bbz_p}(M/\omega_nM) = \len_{\bbz_p}(P/\omega_nP) + \len_{\bbz_p}(L/\omega_nL)\,.\] Observe that \(P\) is annihilated by some power \(p^a\) of \(p\): since \(M\) is Noetherian, the ascending chain \(M[p^i]\) of submodules stabilizes. By Lemma~\ref{lem-euler-characteristics}, there exists an \(n_1\) such that we have \(F(P) = P[\omega_n]\) and \[\len_{\bbz_p}(P/\omega_nP) = p^n\len_{\Lambda_{(p)}}(P_{(p)}) + \len_{\bbz_p}(P[\omega_n])\] for any \(n \geq n_1\).

Fix for now an integer \(n \geq n_1\) and let \(k = k(n)\) be an integer large enough so that \(p^k(M/\omega_n M) = p^kP = 0\). We have:
\begin{align*}
    d_A(\rk_{(p),k},\rk_{n,k}) &= |\dim_{(p),k}(M) - \dim_{n,k}(M)|\\
    &= \left|\frac{\len_{\Lambda_{(p)}}(M_{(p)}/p^kM_{(p)})}{k} - \frac{\len_{\bbz_p}(M/(\omega_nM + p^kM))}{kp^n}\right|\\
    &= \left|\frac{\len_{\Lambda_{(p)}}(P_{(p)})}{k} - \frac{\len_{\bbz_p}(M/\omega_nM)}{kp^n}\right|\\
    &=  \left|\frac{\len_{\Lambda_{(p)}}(P_{(p)})}{k}- \frac{p^n\len_{\Lambda_{(p)}}(M_{(p)})}{kp^n} -  \frac{\len_{\bbz_p}(F(P))}{kp^n} - \frac{\len_{\bbz_p}(L/\omega_nL)}{kp^n}\right|\\
    &= \frac{\len_{\bbz_p}(F(P)) + \len_{\bbz_p}(L/\omega_nL)}{kp^n}
\end{align*}
As \(\len_{\bbz_p}(F(P))\) is a constant, it suffices to prove that \[\len_{\bbz_p}(L/\omega_nL) = \lambda n + C\] for some constant \(C\) and \(n\) greater than some constant \(n_0 \geq n_1\). By Lemma~\ref{lem-scaling-lemma}, we can choose \(n_0\) such that for any \(n \geq n_0 \geq n_1\) we have \(\omega_{n+1}L = p\omega_nL\). Hence:
\begin{align*}
    \len_{\bbz_p}(L/\omega_nL) &= \len_{\bbz_p}(\omega_{n_0}L/\omega_nL) + \len_{\bbz_p}(L/\omega_{n_0}L)\\
    &= \left(\sum_{m=n_0}^{n-1} \len_{\bbz_p}(\omega_mL/\omega_{m+1}L)\right) + \len_{\bbz_p}(L/\omega_{n_0}L)\\
    &= \left(\sum_{m=n_0}^{n-1} \len_{\bbz_p}(L/pL)\right) + \len_{\bbz_p}(L/\omega_{n_0}L)\\
    &= (n-n_0)\lambda + \len_{\bbz_p}(L/\omega_{n_0}L)\\
    &= \lambda n + C\text{ for }C = \len_{\bbz_p}(L/\omega_{n_0}L) -\lambda n_0\,.
\end{align*}
One can then take \(\nu = C + \len_{\bbz_p}(F(P))\).
\end{proof}

\begin{cor}\label{cor-Iwasawa-formula-arbitrary-module} Let \(M\) be a finitely generated torsion \(\Lambda\)-module whose coinvariants \(M/\omega_nM\) are finite for all sufficiently large \(n\). Then, there exist integers \(\mu \geq 0\), \(\lambda \geq 0\), \(\nu\) and \(n_0\) such that for all \(n \geq n_0\) we have \[\log_p |M/\omega_nM| = \mu p^n + \lambda n + \nu\,.\]
\end{cor}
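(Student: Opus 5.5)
We combine Corollary~\ref{cor-Iwasawa-for-torsion} with Theorem~\ref{thm-bottom-edge}. First we fix a finite presentation \(M \simeq \Lambda^r/A\Lambda^s\), which exists because \(\Lambda\) is Noetherian. We take \(\mu = \len_{\Lambda_{(p)}}(M_{(p)})\), a non-negative integer by Lemma~\ref{lem-localization-at-p-structure}. Theorem~\ref{thm-bottom-edge} then provides integers \(\lambda \geq 0\), \(\nu\) and \(n_0\).

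Fix \(n \geq n_0\). Theorem~\ref{thm-bottom-edge} gives \(d_A(\rk_{(p),k},\rk_{n,k}) = (\lambda n + \nu)/(kp^n)\) for all \(k\) large enough in terms of \(n\). Corollary~\ref{cor-Iwasawa-for-torsion} gives \(\log_p|M/(\omega_nM + p^kM)| = \mu p^n + kp^n d_A(\rk_{(p),k},\rk_{n,k})\) for all \(k\) beyond some \(k_0\). The threshold \(k_0\) comes from \(p^{k_0}M_{(p)} = 0\), so it is independent of \(n\). Choosing \(k\) larger than both thresholds and substituting, we get
\[\log_p|M/(\omega_nM + p^kM)| = \mu p^n + \lambda n + \nu.\]

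It remains to replace \(M/(\omega_nM+p^kM)\) by \(M/\omega_nM\). As in the proof of Theorem~\ref{thm-bottom-edge}, the hypothesis gives that \(M/\omega_nM\) is finite for \(n \geq n_0\); shrinking \(n_0\) if necessary, we may assume it is finite. It is therefore annihilated by some \(p^{k}\). Enlarging \(k\) so that also \(p^k(M/\omega_nM)=0\), we get \(p^kM \subseteq \omega_nM\), hence \(M/(\omega_nM + p^kM) = M/\omega_nM\). This is exactly the condition \(p^k(M/\omega_nM)=0\) already imposed in the proof of Theorem~\ref{thm-bottom-edge}. The displayed identity then becomes the claimed formula, whose right-hand side does not depend on \(k\).

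The only delicate point is the order of quantifiers. The integers \(\mu,\lambda,\nu,n_0\) must not depend on \(k\), while \(k\) may depend on \(n\). This holds because \(\mu\) is intrinsic to \(M\), and \(\lambda,\nu,n_0\) are produced in Theorem~\ref{thm-bottom-edge} before \(k\) is chosen. So for each \(n\geq n_0\) we may pick a single \(k\) satisfying all three finitely many lower bounds at once.
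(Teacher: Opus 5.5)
Your proposal is correct and follows the paper's proof: you fix a presentation, take \(\mu = \len_{\Lambda_{(p)}}(M_{(p)})\) and \(\lambda,\nu,n_0\) from Theorem~\ref{thm-bottom-edge}, and substitute into Corollary~\ref{cor-Iwasawa-for-torsion} with \(k\) large depending on \(n\). You are in fact more careful than the paper, which passes silently from \(M/(\omega_nM+p^kM)\) to \(M/\omega_nM\); your observation that \(p^k(M/\omega_nM)=0\) identifies the two is exactly what justifies that step. One small slip: to guarantee that \(M/\omega_nM\) is finite you would enlarge \(n_0\), not shrink it. The point is moot anyway, since \(M/\omega_nM\) surjects onto \(M/\omega_{n'}M\) for \(n'\leq n\), so \(M/\omega_nM\) is finite for every \(n\).
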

\begin{proof} Let \(\lambda\), \(\nu\) and \(n_0\) be the integers from Theorem~\ref{thm-bottom-edge}, \(A\) be a matrix presenting \(M\) and take \(\mu = \len_{\Lambda_{(p)}}(M_{(p)})\). By Corollary~\ref{cor-Iwasawa-for-torsion}, for each \(n \geq n_0\) and sufficiently large \(k\) depending on \(n\) we have:
\begin{align*}
    \log_p|M/\omega_nM| &= \mu p^n + kp^n\cdot d_A(\rk_{(p),k},\rk_{n,k})\\
    &= \mu p^n + kp^n \cdot \frac{\lambda n + \nu}{kp^n}\\
    &= \mu p^n + \lambda n + \nu\,.\qedhere
\end{align*}
\end{proof}

\section{Totally ramified, single prime \texorpdfstring{\(\bbz_p\)}{Zp}-extensions}\label{sec-totally-ramified}

We can now turn to the proof of Theorem~\ref{thm-iwasawa-structure-theorem}. As such, we recall its notation: \(F\) is a number field, \(F = F_0 \subset F_1 \subset F_2 \subset \cdots\) is a tower of finite normal extensions of \(F\) such that for \(L = \bigcup_{i\geq 0} F_i\) we have \(G=\Gal(L/F) = \overline{\langle \gamma \rangle} \simeq \bbz_p\). For each \(n \geq 0\), \(L_n\) denotes the subfield fixed by \(G^{p^n} \simeq p^n\bbz_p\) and \(p^{e_n}\) is the highest power of \(p\) dividing the order of the class group \(\Cl(L_n)\). We let \(T = \gamma - 1 \in \Zalgebra{G}\), \(\Lambda = \Zalgebra{G} = \Zalgebra{T}\), \(\omega_n = \gamma^{p^n} - 1\) and for any abelian group \(M\) we denote by \(M(p)\) its \(p\)-primary part.

Bearing in mind the statement of Corollary~\ref{cor-Iwasawa-formula-arbitrary-module}, the ideal scenario would be to find a torsion \(\Zalgebra{G}\)-module \(M\) for which we have \(|M/\omega_nM| = |\Cl(L_n)(p)|\) whenever \(n\) is large enough. Unfortunately, there is no natural candidate for this \(M\) in general (see (4) of Theorem~\ref{thm-GCFT}). There is, however, a situation in which this module does exists: whenever there exists a single prime \(v\) on \(L\) over \(p\), and this prime is totally ramified. 

It seems to be a tradition to first prove his formula in the totally ramified case, and then to follow through with general \(\bbz_p\)-extensions (see \cite[Sec. 3.1.2]{Ochiai2023Iwasawa1}). This pedagogical choice also dates back to Iwasawa's paper \cite{iwasawa1959gamma}, which discusses a simplification of the proof under this particular hypothesis at the end. It is not the case that general extensions reduce to the totally ramified case, much to the contrary: the totally ramified case illustrates very clearly the mechanism behind Iwasawa's formula, and the presence of other primes and residue-field extensions only introduce technicalities one has to work around. As such, we keep on with this tradition. Hence, consider the following hypothesis: \\ \

\paragraph{\bfseries(HYP)} \(F\) has a unique prime above \(p\), which is totally ramified in \(L/F\).

\begin{proof}[Proof of Theorem~\ref{thm-iwasawa-structure-theorem} assuming {\upshape\bfseries(HYP)}] Let \(v\) be the unique prime of \(L\) over \(p\). In particular, for each \(n \geq 0\), its restriction \(u_n\) to \(L_n\) is the unique prime of \(L_n\) over \(p\) as well. The assumption that \(L/F\) is totally ramified at \(p\) means that for each \(n\) the inertia groups \(I_{v,n} = I(v/u_n)\) of \(v\) over \(L_n\) equals the whole Galois group \(G^{p^n}\), and the map \(\Sigma_n\colon I_{v,n} \to G^{p^n }\) of Theorem~\ref{thm-GCFT} is the identity. Hence, if \(X = \varprojlim \Cl(L_n)(p)\) denotes the inverse limit of the \(p\)-primary part of the class groups along the norm maps, by part (4) of Theorem~\ref{thm-GCFT} we have an isomorphism \(X/\omega_nX \simeq \Cl(L_n)(p)\) for each \(n \geq 0\). 

By Nakayama's lemma (\cite[Prop. 1.2.5]{Souza2026Thesis}), a profinite \(\Lambda\)-module \(M\) is finitely generated if and only if \(M/(T,p)M\) is finite. Since \(T = \omega_0\), we see that \(X/(\omega_0,p)X \simeq \Cl(F)(p)/p\Cl(F)(p)\) is finite by part (1) of Theorem~\ref{thm-GCFT} and thus \(X\) is finitely generated. By Proposition~\ref{prop-Harris-theorem}, we have \[\dim_\Lambda X = \lim_{n \to \infty} \frac{\dim_{\bbq_p} (X/\omega_nX) \otimes_{\bbz_p} \bbq_p}{p^n} = 0\,,\] so \(X\) is torsion. By Corollary~\ref{cor-Iwasawa-formula-arbitrary-module}, there are integers \(\mu \geq 0\), \(\lambda \geq 0\) and \(\nu\) such that for large enough \(n\) we have \[e_n = \log_p|X/\omega_nX| = \mu p^n + \lambda n + \nu\,.\qedhere\]
\end{proof}

As established in the proof of Corollary~\ref{cor-Iwasawa-for-torsion} and Theorem~\ref{thm-bottom-edge}, in this case we can express the constants \(\mu\) and \(\lambda\) in terms of the following ``rank-theoretical'' invariants of \(X\): \[\mu = \len_{\Lambda_{(p)}}(X_{(p)})\,, \quad \lambda = \dim_{\bbq_p} (X \otimes_{\bbz_p} \bbq_p)\,.\]

\section{General \texorpdfstring{\(\bbz_p\)}{Zp}-extensions}\label{sec-general-extensions}

Keeping with the notation of Section~\ref{sec-totally-ramified}, we now drop the hypothesis on how \(p\) splits in \(F\) or ramifies in \(L\). By parts (3) and (5) of Theorem~\ref{thm-GCFT}, there exists an integer \(m\geq 0\) such that all ramified primes in \(L/L_m\) are totally ramified and the maps \(X \to \Cl(L_n)(p)\) are surjective. Let \(\ffp_1\,,\ldots\,,\ffp_s\) be the primes in \(L_m\) which ramify in \(L/L_m\). Thus, for any \(n \geq m\), any prime \(v\) of \(L_n\) that ramifies in \(L/L_n\) lies above a unique \(\ffp_i\), and there is only one prime of \(L_n\) above each \(\ffp_i\). We can therefore label the ramified primes of \(L/L_n\) by \(v_{n,1}\,,\ldots\,,v_{n,s}\), and in particular their quantity is bounded independently of \(n\).

For each \(v_{n,r}\), its inertia subgroup \(I_{v_{n,r}}\) equals the whole Galois group \(G^{p^n}\), and thus \(\Sigma_n\colon \bigoplus_{r=1}^s I_{v_{n,r}} \to G^{p^n}\) is simply the addition map \(\bbz_p^s \to \bbz_p\). In particular, \(\ker \Sigma_n \simeq \bbz_p^{s-1}\) and \(\coker \Sigma_n = 0\). By part (4) of Theorem~\ref{thm-GCFT}, we get the exact sequence \begin{equation}\label{eq-final-descent} \bbz_p^{s-1} \to X/\omega_nX \to \Cl(L_n)(p) \to 0\,.
\end{equation}

Let \(Y_n = \ker(X \to \Cl(L_n)(p))\) and \(Y = Y_m\). Recall from part (5) of Theorem~\ref{thm-GCFT} that for \(\eta_{n,m} = \dfrac{\omega_n}{\omega_m}\) we have \(Y_n = \eta_{n,m}Y\). In particular, for any \(n \geq m\), we get two more short exact sequences \begin{equation}\label{eq-ses-Y-X-Cl}0 \to \eta_{n,m}Y \to X \to \Cl(L_n)(p) \to 0\,\end{equation}\begin{equation}\label{eq-ses-Y-CL-CL}
0 \to Y/\eta_{n,m}Y \to \Cl(L_n)(p) \to \Cl(L_m)(p) \to 0\,.\end{equation} The short exact sequence~(\ref{eq-ses-Y-CL-CL}) implies that \(Y/\eta_{n,m}Y\) is finite, while the exact sequence~(\ref{eq-final-descent}) implies that \(Y/\omega_nX\) is finitely generated as a \(\bbz_p\)-module.

\begin{lem} The \(\Lambda\)-module \(X\) is finitely generated and torsion.
\end{lem}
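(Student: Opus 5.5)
We follow the proof under \textbf{(HYP)} in Section~\ref{sec-totally-ramified}: first finite generation via Nakayama, then torsion via Proposition~\ref{prop-Harris-theorem}. The only new difficulty is that \(X/\omega_nX\) no longer equals \(\Cl(L_n)(p)\); by~(\ref{eq-final-descent}) it differs from it by the image of \(\bbz_p^{s-1}\), whose size is bounded independently of \(n\).

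For finite generation, take \(n = m\) in~(\ref{eq-final-descent}). This shows that \(X/\omega_mX\) is an extension of the finite group \(\Cl(L_m)(p)\) by a quotient of \(\bbz_p^{s-1}\). Hence \(X/\omega_mX\) is a finitely generated \(\bbz_p\)-module. Since \(\omega_m \in (T,p)\), the module \(X/(T,p)X\) is a quotient of \(X/(\omega_m,p)X\), which is a finitely generated \(\bbz_p/p\)-module and therefore finite. By part (1) of Theorem~\ref{thm-GCFT}, \(X\) is a profinite (pro-\(p\)) \(\Lambda\)-module. The topological Nakayama lemma (\cite[Prop. 1.2.5]{Souza2026Thesis}) then shows that \(X\) is finitely generated over \(\Lambda\).

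For torsion, we show \(\dim_\Lambda X = 0\) using Proposition~\ref{prop-Harris-theorem}, which gives
\[\dim_\Lambda X = \lim_{n\to\infty}\frac{\dim_{\bbq_p}(X/\omega_nX)\otimes_{\bbz_p}\bbq_p}{p^n}.\]
The exact sequence~(\ref{eq-final-descent}) remains exact after applying \(-\otimes_{\bbz_p}\bbq_p\), since \(\bbq_p\) is flat over \(\bbz_p\). Because \(\Cl(L_n)(p)\) is finite, its tensor with \(\bbq_p\) vanishes, so \(\dim_{\bbq_p}(X/\omega_nX)\otimes\bbq_p \leq s-1\) for every \(n\geq m\). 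Dividing by \(p^n\) and letting \(n\to\infty\) gives \(\dim_\Lambda X = 0\). This means \(X\otimes_\Lambda\calQ = 0\), i.e.\ \(X\) is torsion.

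The main point needing care is the Nakayama step. We must confirm that the exact sequence in part (4) of Theorem~\ref{thm-GCFT} holds for \(n = m\), and that \(X\) carries its natural compact topology so that the profinite version of Nakayama applies. If needed, we can avoid the dependence on \(m\) by running the same argument with any fixed \(n\geq m\). The torsion step is mechanical once the bound on the \(\bbq_p\)-dimension is in hand; its key input is that the number \(s\) of ramified primes is independent of \(n\).
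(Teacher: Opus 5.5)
Your proposal is correct and follows the paper's proof step by step. You get finite generation from the exact sequence~(\ref{eq-final-descent}) at level \(n=m\) together with the topological Nakayama lemma. You get torsion by tensoring~(\ref{eq-final-descent}) with \(\bbq_p\) and applying Proposition~\ref{prop-Harris-theorem} with the uniform bound \(s-1\).

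The one cosmetic difference is where Nakayama is applied. You use it directly over \(\Lambda\) via the quotient \(X/(T,p)X\), while the paper uses it over the subring \(\Lambda_m=\Zalgebra{G^{p^m}}\) via \(X/(\omega_m,p)X\). Both versions work equally well.
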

\begin{proof} For each \(n \geq m\), let \(\Lambda_n = \Zalgebra{G^{p^n}}\). It suffices to prove that \(X\) is finitely generated as a \(\Lambda_m\)-module. Since it is profinite, by Nakayama's lemma (\cite[Prop. 1.2.5]{Souza2026Thesis}) we once again see that it is finitely generated if and only if \(X/(\omega_m,p)X\) is finite. Tensoring the exact sequence~(\ref{eq-final-descent}) with \(\bbf_p\) over \(\bbz_p\) yields the exact sequence \[\bbf_p^{s-1} \to X/(\omega_m,p)X \to \Cl(L_m)/p\Cl(L_m) \to 0\,.\] Hence, \(X/(\omega_m,p)X\) is finite, and so \(X\) is finitely generated.

Now, tensoring the exact sequence~(\ref{eq-final-descent}) with \(\bbq_p\) over \(\bbz_p\) yields the exact sequence \[\bbq_p^{s-1} \to (X/\omega_nX) \otimes_{\bbz_p} \bbq_p \to 0\,,\] and so Proposition~\ref{prop-Harris-theorem} gives us \[\dim_\Lambda(X) = \lim_{n \to \infty} \frac{\dim_{\bbq_p}((X/\omega_nX) \otimes_{\bbz_p} \bbq_p)}{p^n} \leq \lim_{n \to \infty} \frac{s-1}{p^n} = 0\,.\] This proves that \(X\) is torsion.
\end{proof}

In particular, all the \(\Lambda\)-submodules \(Y_n\) of \(X\) are also finitely generated and torsion. We are now ready to prove the main theorem of this note. We will accomplish this by repeating the proof of Lemma~\ref{lem-euler-characteristics}, Lemma~\ref{lem-scaling-lemma} and Theorem~\ref{thm-bottom-edge} with \(\eta_{n,m}\) playing the role of \(\omega_n\), with the due changes noted.

\begin{proof}[Proof of Theorem~\ref{thm-iwasawa-structure-theorem}] Write \[P = \bigcup_{i\geq 0} Y[p^i]\,,\quad N = Y/P\,.\] Since \(Y\) is Noetherian, the chain \(Y[p^i]\) stabilizes, and thus there exists some \(a \geq 0\) such that \(p^aP = 0\). As in the proof of Theorem~\ref{thm-bottom-edge}, \(N\) is finitely generated and free as a \(\bbz_p\)-module, say of rank \(\lambda\).

Since \(Y/\eta_{n,m}Y\) is finite by the short exact sequence~(\ref{eq-ses-Y-CL-CL}), we see that \(\ker(N \overset{\eta_{n,m}\cdot }{\to} N) = 0\). Applying the snake lemma to the commutative diagram \[\begin{tikzcd}
0 \arrow[r] & P \arrow[r] \arrow[d, "{\eta_{n,m}\cdot}"'] & Y \arrow[r] \arrow[d, "{\eta_{n,m}\cdot}"'] & N \arrow[r] \arrow[d, "{\eta_{n,m}\cdot}"'] & 0 \\
0 \arrow[r] & P \arrow[r]                                 & Y \arrow[r]                                 & N \arrow[r]                                 & 0
\end{tikzcd}\] yields the short exact sequence \[0 \to P/\eta_{n,m}P \to Y/\eta_{n,m}Y \to N/\eta_{n,m}N \to 0\,.\] Hence: \begin{equation}\label{eq-final-1}e_n - e_m=\log_p|Y/\eta_{n,m}Y| = \log_p|P/\eta_{n,m}P| + \log_p|N/\eta_{n,m}N|\,.\end{equation}
The proof of Lemma~\ref{lem-euler-characteristics} applies verbatim with \(\eta_{n,m}\) in place of \(\omega_n\), and since \(\eta_{n,m} \equiv T^{p^n-p^m} \pmod{p}\) it yields a constant \(n_1 \geq m\) for which whenever \(n \geq n_1\) we have \(P[\eta_{n,m}] = F(P)\) and \begin{equation}\label{eq-final-2}\log_p|P/\eta_{n,m}P| = \len_{\Lambda_{(p)}}(P_{(p)}) (p^n-p^m) + \log_p|F(P)|\,.\end{equation}

We now finish as in Theorem~\ref{thm-bottom-edge} and Lemma~\ref{lem-scaling-lemma}. For \(n \geq n_0 \geq n_1\) where \(n_0\) is some constant, the element \[\eta_{n+1,n} = 1 + \gamma^{p^n} + \cdots + \gamma^{(p-1)p^n}\] acts on \(N\) as an endomorphism \(S_{n+1}\) satisfying \(S_{n+1}\equiv p\operatorname{id} \pmod{p^2}\), and so we can write \(S_{n+1} = pU_{n+1}\) with \(U_{n+1} \equiv \operatorname{id} \pmod p\). In particular, \(U_{n+1}\) is an automorphism of \(N\) and we have that \(\eta_{n+1,m} = \eta_{n+1,n}\eta_{n,m}\) satisfies \(\eta_{n+1,m}N = p\eta_{n,m}N\). These give
\begin{equation}\label{eq-final-3}\begin{split}
    \log_p|N/\eta_{n,m}N| &= \log_p|\eta_{n_0,m}N/\eta_{n,m}N| + \log_p|N/\eta_{n_0,m}N|
    \\ &=\left(\sum_{k=n_0}^{n-1} \log_p|\eta_{k,m}N/\eta_{k+1,m}N|\right) + \log_p|N/\eta_{n_0,m}N|\\
    &= \left(\sum_{k=n_0}^{n-1} \log_p|N/pN|\right) + \log_p|N/\eta_{n_0,m}N|\\
    &= (n-n_0)\lambda + \log_p|N/\eta_{n_0,m}N|\,.
\end{split}
\end{equation}
We take \[\mu = \len_{\Lambda_{(p)}}(P_{(p)})\,,\quad \lambda = \dim_{\bbq_p}(N\otimes_{\bbz_p} \bbq_p)\]\[\text{and }\nu = \log_p|F(P)| + \log_p|N/\eta_{n_0,m}N| - n_0\lambda + e_m -\mu p^m\,.\] By combining the equations~(\ref{eq-final-1}),~(\ref{eq-final-2}) and~(\ref{eq-final-3}) we obtain \[e_n= \mu p^n+ \lambda n + \nu\] whenever \(n \geq n_0\).
\end{proof}

\begin{rmk}[{cf. \cite[Rmk. 2.51]{Ochiai2023Iwasawa1}}] One could give uniform proof of Theorem~\ref{thm-iwasawa-structure-theorem} analogous to the totally ramified, single prime case if one first proves the results of Section~\ref{sec:iwasawa-triangle} in a more general setting, at the expense of working with less explicit rank functions. Concretely, fix an integer \(m \geq 0\) and a distinguished polynomial \(g\) dividing \(\omega_m\), and for \(n\geq m\) put \(f_n = \dfrac{\omega_n}{g}\). This includes both the families \(f_n = \omega_n\) with \(g = 1\) and \(f_n = \eta_{n,m}\) with \(g = \omega_m\). One then replaces the rank functions \(\rk_{n,k}\) by those induced from the finite length module \(\Lambda/(f_n,p^k)\). The following results are then shown by very similar arguments:
\begin{itemize}
    \item (Lemma~\ref{lem-topology-iwasawa-triangle}) For \(k\) fixed, \(\lim_{n \to \infty} \rk_{n,k} = \rk_{(p),k}\).
    \item (Theorem~\ref{thm-main-triangle-inequality}) \(\rk_{n,k}(A) \leq \rk_{(p),k}(A) \leq \rk_\Lambda(A)\) for any matrix \(A\) over \(\Lambda\).
    \item (Lemma~\ref{lem-scaling-lemma}) Let \(L\) be a finitely generated \(\Lambda\)-module that is a free \(\bbz_p\)-module of finite rank, and assume that some power of \(T\) annihilates \(L/pL\). Then, there exists an integer \(n_0\) such that for all \(n \geq n_0\) we have \(f_{n+1}L = pf_nL\).
\end{itemize}
This allows one to establish a version of Corollary~\ref{cor-Iwasawa-formula-arbitrary-module} for the growth of the modules \(M/f_nM\) of the form \[\log_p |M/f_nM| = \mu p^n + \lambda n + \nu\] for large enough \(n\). Applying this to \(g = 1\) and \(M = X\) recovers Theorem~\ref{thm-iwasawa-structure-theorem} under the hypothesis {\bfseries(HYP)} of Section~\ref{sec-totally-ramified}, while applying it to \(g = \omega_m\) and \(M = Y\) recovers the argument in this section.
\end{rmk}

\bibliographystyle{alpha}
\bibliography{biblio}

@misc{sharifi-iwasawa,
  author       = {Sharifi, Romyar},
  title        = {{Iwasawa} Theory},
  howpublished = {Lecture notes, \url{https://www.math.ucla.edu/~sharifi/notes/iwasawa.pdf}},
  note         = {Accessed: 2026-10-19}
}

@article{iwasawa1959gamma,
  author  = {Iwasawa, Kenkichi},
  title   = {On {$\Gamma$}-extensions of algebraic number fields},
  journal = {Bulletin of the American Mathematical Society},
  year    = {1959},
  volume  = {65},
  number  = {4},
  pages   = {183--226},
  doi     = {10.1090/S0002-9904-1959-10317-7}
}

@phdthesis{Souza2026Thesis,
  author  = {Souza, Henrique},
  title   = {Ranks over {pro-$p$} groups},
  school  = {Universidad Aut{\'o}noma de Madrid},
  year    = {2026},
  address = {Madrid, Spain},
  url     = {https://iq.mat.br/resources/MendesdaSilvaeSouza_HenriqueAugusto_tesis.pdf}
}

@book{Ochiai2023Iwasawa1,
  author    = {Ochiai, Tadashi},
  title     = {{Iwasawa} Theory and Its Perspective, Volume 1},
  series    = {Mathematical Surveys and Monographs},
  volume    = {272},
  publisher = {American Mathematical Society},
  address   = {Providence, RI},
  year      = {2023},
  isbn      = {978-1-4704-5672-6},
  doi       = {10.1090/surv/272}
}

@book{Neukirch1999,
  title = {Algebraic Number Theory},
  ISBN = {9783662039830},
  ISSN = {0072-7830},
  url = {http://dx.doi.org/10.1007/978-3-662-03983-0},
  DOI = {10.1007/978-3-662-03983-0},
  journal = {Grundlehren der mathematischen Wissenschaften},
  publisher = {Springer Berlin Heidelberg},
  author = {Neukirch,  J\"{u}rgen},
  year = {1999}
}

@book{Washington1997,
  title = {Introduction to Cyclotomic Fields},
  ISBN = {9781461219347},
  ISSN = {0072-5285},
  url = {http://dx.doi.org/10.1007/978-1-4612-1934-7},
  DOI = {10.1007/978-1-4612-1934-7},
  journal = {Graduate Texts in Mathematics},
  publisher = {Springer New York},
  author = {Washington,  Lawrence C.},
  year = {1997}
}

@article{Harris2000,
  title = {Correction to p-adic Representations Arising from Descent on Abelian Varieties},
  volume = {121},
  ISSN = {1570-5846},
  url = {http://dx.doi.org/10.1023/A:1001730616194},
  DOI = {10.1023/a:1001730616194},
  number = {1},
  journal = {Compositio Mathematica},
  publisher = {Cambridge University Press (CUP)},
  author = {Harris,  Michael},
  year = {2000},
  month = Mar,
  pages = {105–108}
}

\end{document}